\documentclass{amsart}
\newcommand{\cA}{\mathcal{A}}

\newcommand{\F}{\mathbb{F}}

\newcommand{\bF}{\mathbb{F}}

\newcommand{\bN}{\mathbb{N}}

\newcommand{\bR}{\mathbb{R}}

\newcommand{\bZ}{\mathbb{Z}}
\newtheorem{atheorem}{Theorem}

\newcommand{\R}{\mathbb{R}}

\newcommand{\Z}{\mathbb{Z}}

\newcommand{\m}{\to}
\usepackage[dvipsnames,svgnames,x11names,hyperref]{xcolor}
\usepackage{url,graphicx,verbatim,amssymb,enumerate,stmaryrd}
\usepackage[pagebackref,colorlinks,citecolor=blue,linkcolor=blue,urlcolor=blue,filecolor=blue]{hyperref}
\usepackage{microtype}
\usepackage[all]{xy}
\usepackage[hmargin=3cm,vmargin=3cm]{geometry}
\newtheorem{theorem}{Theorem}[section]
\newtheorem{lemma}[theorem]{Lemma}
\newtheorem{proposition}[theorem]{Proposition}
\newtheorem{corollary}[theorem]{Corollary}

\theoremstyle{definition}
\newtheorem{definition}[theorem]{Definition}

\newtheorem{remark}[theorem]{Remark}

\newcommand{\mr}[1]{{\rm #1}}
\newcommand{\fS}{\mathfrak{S}}

\newcommand{\coker}{\mr{coker}}
\setcounter{tocdepth}{1}

\title[Sharper periodicity and stabilization maps]{Sharper periodicity and stabilization maps for configuration spaces of closed manifolds}
\author{Alexander Kupers}
\thanks{Alexander Kupers is supported by a William R. Hewlett Stanford Graduate Fellowship, Department of Mathematics, Stanford University, and was partially supported by NSF grant DMS-1105058.}
\author{Jeremy Miller}
\date{\today}

\begin{document}
\begin{abstract}
In this note we study the homology of configuration spaces of closed manifolds. We sharpen the eventual periodicity results of Nagpal and Cantero-Palmer, prove integral homological stability for configuration spaces of odd-dimensional manifolds and introduce a stabilization map on the homology with $\Z[1/2]$-coefficients for configuration spaces of odd-dimensional manifolds.
\end{abstract}
\maketitle

\section{Introduction}
Let $F_k(M)$ denote the configuration space of $k$ ordered particles in a (topological) manifold $M$ and let $C_k(M)$ denote the unordered configuration space. That is,
\[F_k(M)=\{(m_1,\ldots,m_k) \in M^k\, |\, m_i \neq m_j \text{ for } i\neq j \} \qquad \text{and} \qquad C_k(M)=F_k(M)/\fS_k\] with $\fS_k$ the symmetric group on $k$ letters acting by permuting the terms. When $M$ is open, there are maps $t:C_k(M) \m C_{k+1}(M)$ which can be thought of as bringing a point in from infinity. These maps induce isomorphisms on homology in a range tending to infinity with $k$ \cite{Mc1}.
 
When $M$ is closed, the homology does not stabilize. For example, $H_1(C_k(S^2)) \cong \Z/(2k-2)\Z$ \cite{FV}. However, if you restrict what coefficients you consider, there are homological stability patterns for configuration spaces of particles in closed manifolds \cite{BCT, FT, Ch, RW, BMi,  Kn, federicomartin, Nag}. One of these patterns is \emph{eventual periodicity}, which is the content of the first part of the following theorem.
 
\begin{atheorem} \label{TheoremA}
Let $M$ be a connected manifold of finite type and $m \in \bN$. \begin{enumerate}[(i)]
\item If $\dim M$ is even, for all $d$ dividing $2m$ and all $i \leq f(k,M,\Z/d\bZ)$, we have that \[H_i(C_k(M);\bZ/d\bZ) \cong H_i(C_{k+m}(M);\bZ/d\bZ)\]
\item If $\dim M$ is odd, for all $i \leq f(k,M,\Z)$ we have that \[H_i(C_k(M);\bZ) \cong H_i(C_{k+1}(M);\bZ)\]
\end{enumerate}
Here $f(k,M,R) \geq k/2$ is the function in Theorem \ref{openConfig}.
\end{atheorem}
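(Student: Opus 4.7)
The plan is to deduce the closed case from the open case (Theorem \ref{openConfig}) by constructing suitable stabilization maps $C_k(M) \to C_{k+m}(M)$ directly on the closed manifold and comparing them to the standard stabilization on $C_k(M \setminus \{p\})$ for a chosen point $p \in M$. Since $M^{\circ} := M \setminus \{p\}$ is open, the map $t : C_k(M^\circ) \to C_{k+1}(M^\circ)$ is an isomorphism on homology in degrees $\leq f(k,M,R)$, and the goal is to transport this isomorphism to $M$.

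For part (ii), the key geometric input is that a closed odd-dimensional manifold has $\chi(M) = 0$, so its tangent (micro)bundle admits a nowhere-vanishing section in a suitable sense. Using such a section, I would construct a stabilization map $s : C_k(M) \to C_{k+1}(M)$ by flowing the existing $k$ points slightly along the vector field and inserting a new point at a distinguished "source." By construction, $s$ restricts to the usual point-pushing map on $C_k(M^\circ)$. A five-lemma/comparison argument in a long exact sequence or Leray spectral sequence relating $C_k(M^\circ)$ and $C_k(M)$ (the latter being obtained from the former by allowing points to approach $p$) then transports the open isomorphism to an integral isomorphism $H_i(C_k(M);\Z) \cong H_i(C_{k+1}(M);\Z)$ in the claimed range.

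For part (i), when $\dim M$ is even, no such nowhere-vanishing section exists and the naive stabilization is obstructed. However, the obstruction is governed by the Euler class, which becomes $2$-torsion in the relevant sense after iteration: an $m$-fold iterate of the would-be stabilization map can be made well-defined up to a sign ambiguity of order $2$. Passing to $\Z/d\Z$-coefficients with $d \mid 2m$ kills this obstruction and yields a well-defined map $C_k(M) \to C_{k+m}(M)$ inducing an isomorphism on $H_i(-;\Z/d\Z)$ in the range $i \leq f(k,M,\Z/d\Z)$, by the same comparison with the open case.

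The main obstacle I expect is the comparison step: setting up a spectral sequence or cofiber sequence relating $H_*(C_k(M))$ to $H_*(C_k(M^\circ))$ whose error terms can be controlled by $f(k,M,R)$ to give the sharp range $k/2$. A secondary difficulty is carrying out the vector-field/section construction in the topological category, which requires either working microbundle-theoretically or reducing to a handle-decomposition model. Once the comparison is in place, the identification of the obstruction as $2$-torsion in the even-dimensional case is essentially a calculation with the Euler class.
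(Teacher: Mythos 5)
Your proposal is fundamentally different from the paper's argument, and the core step does not work. The paper explicitly states that no natural map inducing the isomorphisms of Theorem \ref{TheoremA} is known. Its proof does not construct a stabilization map $C_k(M) \to C_{k+m}(M)$; instead it uses Randal-Williams's cofiber sequence $C_k(M) \to \Sigma^n C_{k-1}(M\setminus\{m_0\})_+ \to \Sigma C_k(M\setminus\{m_0\})_+$ and compares cardinalities of kernels and cokernels of the boundary map $t_S$ at $k$ and $k+m$ particles. The comparison is possible because multiplication by $P^m$ commutes with $t_S$ up to the Browder bracket $\phi(P,P^m) = mP^{m-1}\phi(P,P)$, and $\phi(P,P)$ is twice the generator of $H_{n-1}(\R P^{n-1})$, so the bracket vanishes mod $d$ for $d \mid 2m$ (or integrally when $n$ is odd). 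Even then one only gets an equality of cardinalities of homology groups, and a nontrivial algebraic lemma (\ref{algebralemma}) is needed to promote this to an isomorphism of abelian groups.

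Concretely, two steps of your plan fail. First, the vector-field construction in part (ii) does not produce a map of spaces: a nowhere-vanishing vector field on a closed manifold has no source (the flow at each time is a homeomorphism), so there is no distinguished point from which all $k$ particles are pushed away, and the inserted particle can collide with an existing one. This is exactly the difficulty the paper grapples with, and it only resolves it after inverting $2$ in Theorem \ref{TheoremB}, using the bounded symmetric product $\mathrm{Sym}^{\leq 2}$ as an intermediary; the resulting map is a zig-zag, not a genuine stabilization map $C_k(M)\to C_{k+1}(M)$. Second, the claim in part (i) that an $m$-fold iterate of the would-be stabilization "can be made well-defined up to a sign ambiguity of order $2$" which is then killed by $\Z/d\Z$ coefficients has no support: obstructions to space-level maps are not signs that can be cancelled by a change of coefficients in homology, and the factor $2m$ appearing in the hypothesis $d \mid 2m$ traces back to the Browder bracket computation $\phi(P,P^m) = 2m \cdot(\text{class})$, not to a $2$-torsion Euler class. (The Euler characteristic does enter in Cantero--Palmer's approach, but their period depends on $\chi(M)$ and requires $\chi(M) \neq 0$, which is a different and incomparable statement.) Finally, your "five-lemma/comparison" step is missing the key subtlety: without a space-level map you cannot apply the five lemma, and the paper's cardinality argument only determines $\#H_i$, so the algebraic Lemma \ref{algebralemma} is genuinely needed to conclude.
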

 
Specializing Theorem \ref{TheoremA} to the case $m=1$ recovers the statement that the groups $H_i(C_k(M);\F_2)$ stabilize, a result appearing in \cite{RW} and which can also be deduced from the calculation of \cite{BCT}. Regarding the second part, it was previously known that when $M$ is odd-dimensional, the homology groups stabilize when working with field coefficients \cite{BCT,RW} and $\bZ[1/2]$-coefficients \cite{federicomartin}. It is interesting that although the two parts of Theorem \ref{TheoremA} seem different, their proofs are extremely similar.

The first part of Theorem \ref{TheoremA} is an improvement on two recent results:
\begin{itemize}
\item As Theorem F of \cite{Nag}, Nagpal proved the following.
\begin{theorem}[Nagpal]
There are constants $N_i$ such that for all finite type connected orientable manifolds $M$, $H_i(C_k(M);\F_p) \cong H_i(C_{k+p^{(i+3)(2i+2)}}(M);\F_p)$ for all $k \geq N_i$.
\end{theorem}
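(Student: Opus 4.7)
The plan is to proceed via the theory of FI-modules, treating $V^i_\bullet := H_i(C_\bullet(M);\F_p)$ as a functor from the category of finite sets with injections to $\F_p$-vector spaces. The argument has three main steps: establishing a finiteness property for $V^i_\bullet$, deducing eventual periodicity over $\F_p$, and extracting the explicit exponential bound.

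First, I would verify that $V^i_\bullet$ is \emph{presented in finite degrees} in the sense of FI-module theory. For non-compact $M$, Church--Ellenberg--Farb showed $V^i_\bullet$ is even finitely generated. For closed $M$ the module fails to be finitely generated (as witnessed by $H_1(C_k(S^2))$ cited above), so one has to be cleverer: the standard move is to resolve $V^i_\bullet$ by FI-modules arising from configurations in $M$ with a point removed, for instance via the Leray--Serre spectral sequence for the evaluation fibration $C_k(M) \to M$ (or a scanning-type cofiber sequence). This reduces the closed case to the open case plus controlled higher error terms, all of which are presented in finite degrees.

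Second, I would invoke a structural theorem in characteristic $p$: any FI-module over $\F_p$ presented in finite degrees is eventually periodic with period a power of $p$. The mechanism is that, for $m$ a sufficiently large power of $p$, the natural shift map $V \to \Sigma_m V$ becomes an isomorphism in high enough degrees. The input is control on the local cohomology of $V^i_\bullet$, which is concentrated in bounded degrees once $V^i_\bullet$ is presented in finite degrees; the characteristic-$p$ hypothesis then lets one annihilate the torsion with a $p$-power shift, since symmetric-group representations in characteristic $p$ enjoy periodicities (essentially Frobenius-type identifications) invisible integrally.

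Third, to obtain the explicit bound $p^{(i+3)(2i+2)}$, one tracks generation and relation degrees of $V^i_\bullet$ through the resolution: the factor $(2i+2)$ reflects the fact that configuration-space FI-modules are generated and related in degree roughly $2i$, while the factor $(i+3)$ records the number of iterated shifts needed to clear the torsion coming from the higher derived functors of the shift. The main obstacle is exactly this bookkeeping: qualitative periodicity with some $p$-power period follows cleanly from general FI-theory once the finiteness property is in hand, but pinning down the specific polynomial exponent $(i+3)(2i+2)$ requires a delicate simultaneous induction on $i$ controlling generation degree, relation degree, and $p$-torsion exponents at every stage of the resolution.
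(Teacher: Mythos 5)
This statement is cited, not proved, in the paper (it is Theorem~F of \cite{Nag}), so there is no in-paper proof to compare against; but the paper does sketch Nagpal's strategy, and your proposal diverges from it in a way that is not merely stylistic but contains a genuine error. The central problem is that $V^i_\bullet := H_i(C_\bullet(M);\F_p)$ is \emph{not} an FI-module: for $M$ closed there is no map $C_k(M)\to C_{k+1}(M)$ at all, and even for $M$ open the stabilization map is a single map, not a functor out of the category of finite sets and injections. The object that does carry an FI-module structure is $H^i(F_\bullet(M);\F_p)$, the cohomology of the \emph{ordered} configuration spaces, with the injections acting by forgetting points; Church--Ellenberg--Farb prove this is finitely generated for any connected orientable manifold of dimension $\geq 2$, compact or not, so your proposed dichotomy (``finitely generated for non-compact $M$, fails for closed $M$'') is pointing at the wrong object and the wrong failure. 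Relatedly, your second step asserts that a finitely generated FI-module over $\F_p$ is itself eventually periodic; over a field a finitely generated FI-module has dimensions that eventually agree with a polynomial in $k$, which is not periodic. The periodicity in Nagpal's theorem lives one level up: it is the twisted symmetric-group cohomology $H^j(B\fS_k; G_k)$ that becomes eventually periodic in $k$ with period a $p$-power, when $G$ is a finitely generated FI-module over $\F_p$.

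Nagpal's actual route is therefore: (1) $G^i := H^i(F_\bullet(M);\F_p)$ is a finitely generated FI-module (Church--Ellenberg--Farb, using orientability of $M$); (2) a general structural theorem that $H^j(B\fS_k; G_k)$ is eventually periodic in $k$ with $p$-power period, with the period controlled by the generation and relation degrees of $G$; (3) the Serre spectral sequence for the fibration $F_k(M) \to C_k(M) \to B\fS_k$ (using that $\fS_k$ acts freely on $F_k(M)$), which has $E_2^{j,i} = H^j(B\fS_k; H^i(F_k(M);\F_p))$ and converges to $H^{i+j}(C_k(M);\F_p)$, from which the periodicity of $H^*(C_k(M);\F_p)$ follows. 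The explicit exponent $p^{(i+3)(2i+2)}$ comes out of tracking bounds on generation/relation degrees of the FI-modules involved through step (2), not through a resolution of $V^i_\bullet$ by punctured-manifold configuration spaces. Your ``resolve by configurations in $M$ with a point removed'' idea is closer in spirit to the present paper's own approach (the cofiber sequence of Proposition~\ref{cofiber}), which is how Kupers--Miller sharpen the period to $p$ itself; it is a genuinely different, and for this purpose sharper, mechanism than Nagpal's FI-module argument.
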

Nagpal's approach is more general and goes via the cohomology of the symmetric group with coefficients in an FI-module (see \cite{churchellenbergfarb}) over $\F_p$: he proved that if $G$ is a finitely generated FI-module over $\F_p$, the groups $H^j(B \fS_k;G_k)$ are eventually periodic with period a power of $p$. By applying this to the special case $G_k = H^i(F_k(M);\bF_p)$ and using the Serre spectral sequence, Nagpal proved that $H^j(C_k(M);\F_p)$ is eventually periodic. He also gave examples of FI-modules $G$ over $\F_p$ such that the periods of $H^j(B\fS_k;G_k)$ are arbitrarily large powers of $p$.

Theorem \ref{TheoremA} implies that the groups $H^j(C_k(M);\F_p)$ are eventually periodic with period at most $p$. We suspect that $H^j(B \fS_k; H^i(F_k(M);\F_p))$ also has period $p$ as opposed to a higher power of $p$. It would be interesting to know if that is true, and if so, what property of the FI-modules $H^i(F_k(M);\F_p)$ implies that $H^j(C_k(M);\F_p)$ has period $p$.
\item As Corollary E of \cite{federicomartin}, Cantero and Palmer proved the following.
\begin{theorem}[Cantero-Palmer]
Fix a prime $p$ and let $M$ be a connected smooth finite type manifold such that $\chi(M) \neq 0$. Let $L(M)$ be one plus the $p$-adic valuation of $\chi(M)$. Then $H_i(C_k(M);\F_p) \cong H_i(C_{k+p^{L(M)}}(M);\F_p)$ for all $i \leq f(k,M,\F_p)$.
\end{theorem}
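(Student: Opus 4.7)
\medskip

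\noindent\emph{Proof proposal.} The plan is to derive this theorem as an essentially immediate corollary of Theorem~\ref{TheoremA}, which is stated earlier and which I am free to use. Both statements concern eventual periodicity of $H_\ast(C_k(M);\F_p)$, so the strategy reduces to verifying that $p^{L(M)}$ is a valid period in the sense of Theorem~A. In fact, I expect Theorem~A to give a strictly sharper period --- $p$, or even $1$ --- so that the precise arithmetic of $L(M) = 1 + v_p(\chi(M))$ plays no essential role in the deduction.

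I would split the argument according to the parity of $\dim M$. If $\dim M$ is even, I apply Theorem~\ref{TheoremA}(i) with $d = p$ and $m = p^{L(M)}$. Because $L(M) \geq 1$, the prime $p$ divides $p^{L(M)}$, hence also divides $2m$, so the hypothesis of part~(i) is satisfied; the conclusion is exactly $H_i(C_k(M);\F_p) \cong H_i(C_{k+p^{L(M)}}(M);\F_p)$ in the asserted range. If $\dim M$ is odd, Theorem~\ref{TheoremA}(ii) gives period $1$ already with $\bZ$-coefficients in the range $i \leq f(k,M,\bZ)$; applying the universal coefficient theorem degree by degree and iterating $p^{L(M)}$ times yields the desired $\F_p$-isomorphism in the corresponding range.

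A couple of logical remarks. The hypothesis $\chi(M) \neq 0$ enters in the original formulation only to ensure that $L(M)$ is well-defined; it imposes no restriction that our derivation actually uses. Note also that a closed odd-dimensional manifold has $\chi(M) = 0$ by Poincar\'e duality, so the Cantero-Palmer hypothesis already excludes that case, while Theorem~A handles it in any event. There is thus no genuine obstacle in the deduction presented here; the entire substantive difficulty lies in the proof of Theorem~A itself, which is the real content of the note.
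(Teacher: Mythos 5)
This is a theorem the paper \emph{cites} (Corollary E of \cite{federicomartin}) rather than proves; it appears in the introduction only to motivate and contextualize Theorem~\ref{TheoremA}, and the authors give no argument for it, instead noting that the original Cantero--Palmer proof uses smooth techniques (scanning maps, the Euler characteristic hypothesis). So there is no proof in the paper to compare against; what you have written is a re-derivation of an external result from the paper's main theorem. That re-derivation is a perfectly sensible thing to attempt, since the authors do claim Theorem~\ref{TheoremA} ``is an improvement on'' Cantero--Palmer, but it is not what the paper does.

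On the substance: your even-dimensional case is clean. Taking $d = p$ and $m = p^{L(M)}$ in Theorem~\ref{TheoremA}(i), with $p \mid 2m$ automatic because $L(M) \geq 1$, gives exactly the asserted isomorphism in exactly the range $i \leq f(k,M,\F_p)$, and it does so without needing $\chi(M) \neq 0$ at all. Your odd-dimensional case, however, has a genuine gap in the range. Theorem~\ref{TheoremA}(ii) holds only for $i \leq f(k,M,\Z)$, and from Theorem~\ref{openConfig} one has $f(k,M,\Z) = k/2$ (since $2$ is not a unit in $\Z$), whereas for $p$ odd and $\dim M \geq 3$ one has $f(k,M,\F_p) = k$. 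Pushing the integral periodicity through the universal coefficient theorem therefore only yields $\F_p$-periodicity in the range $i \leq k/2$, which is strictly weaker than the $i \leq k$ claimed by Cantero--Palmer. You correctly observe that a closed odd-dimensional manifold has $\chi = 0$, but you do not draw the conclusion that saves the argument: in the odd-dimensional case the hypothesis $\chi(M) \neq 0$ forces $M$ to be non-compact, i.e.\ open, and for open manifolds Theorem~\ref{openConfig} applied directly with $R = \F_p$ gives the stabilization isomorphism in the full range $i \leq f(k,M,\F_p)$, from which the periodicity statement follows by iterating $p^{L(M)}$ times. Routing through Theorem~\ref{TheoremA}(ii) is the wrong move here; you should route through Theorem~\ref{openConfig} instead.
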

Cantero and Palmer's result differs from Nagpal's in that it applies to non-orientable manifolds, but does not apply to manifolds with $\chi(M) = 0$. Furthermore, their period depends on the manifold, while Nagpal's depends on the homological degree. Cantero and Palmer's result is phrased for smooth manifolds and uses several smooth techniques, however their proof can probably be made to work for topological manifolds as well. On the other hand, we should point out that Nagpal's results also give information about the twisted homology of symmetric groups while the methods of this paper and of Cantero and Palmer do not.
\end{itemize}
 
We do not know of any natural maps inducing the isomorphisms of Theorem \ref{TheoremA}. In particular, we are unable to prove that the cohomological versions of the isomorphisms of Theorem \ref{TheoremA} respect the cup product. The second goal of this note is to describe a new way of producing maps between the homology of configuration spaces of particles in closed manifolds. Fix a point in $m_0$. One cannot define a map from $C_k(M)$ to $C_{k+1}(M)$ by simply adding a particle at $m_0$ to the configuration as there may already be a particle there. However, this procedure can be made to work when $M$ is odd-dimensional after inverting 2:
\begin{atheorem}
Let $M$ be odd-dimensional and connected. There is a natural split injection
\[\sigma: H_i(C_k(M);\Z[1/2]) \m H_i(C_{k+1}(M);\Z[1/2])\]
which is an isomorphism for $i \leq f(k,M,\Z[1/2])$.
\label{TheoremB}
\end{atheorem}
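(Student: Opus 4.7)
The plan is to construct $\sigma$ from the natural ``add $m_0$'' map $\iota: C_k(M \setminus \{m_0\}) \to C_{k+1}(M)$, $c \mapsto c \cup \{m_0\}$. Writing $j: C_k(M \setminus \{m_0\}) \hookrightarrow C_k(M)$ for the open inclusion, the goal is to show that $j_*$ admits a canonical splitting $s$ on $\Z[1/2]$-homology, and is moreover an isomorphism in the range $i \leq f(k,M,\Z[1/2])$; then $\sigma := \iota_* \circ s$ gives the desired map.

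The analysis of $j_*$ goes through the Gysin long exact sequence for the open embedding, whose closed complement is the codimension-$d$ submanifold
\[\Sigma := \{c \in C_k(M) : m_0 \in c\} \cong C_{k-1}(M \setminus \{m_0\})\]
with canonically trivial normal bundle $\Sigma \times T_{m_0} M$. The LES reads
\[\cdots \to H_{i-d+1}(\Sigma; \Z[1/2]) \to H_i(C_k(M \setminus \{m_0\}); \Z[1/2]) \xrightarrow{j_*} H_i(C_k(M); \Z[1/2]) \xrightarrow{\partial} H_{i-d}(\Sigma; \Z[1/2]) \to \cdots\]
Since $d$ is odd, fiberwise negation of the normal bundle acts as $-1$ on the Thom class, so the $\Z/2$ ambiguity in its orientation vanishes after inverting $2$. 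Concretely, I would choose a nowhere-vanishing vector field $V$ on $M$ (which exists since $\chi(M) = 0$ in odd dimensions by Poincar\'e--Hopf) to trivialize the normal direction; a chain-level flow-averaging over $\pm V$ then produces a splitting $s$ that is canonical after inverting $2$.

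For split injectivity in all degrees, I would construct a left inverse $\rho$ using the $(k+1)$-fold cover $q: \tilde{C}_{k+1}(M) \to C_{k+1}(M)$ of marked configurations. Set $\rho := \pi_* \circ \tau$, where $\tau$ is the transfer of $q$ and $\pi: \tilde{C}_{k+1}(M) \to C_k(M)$ is the ``forget the marked point'' map $(x,c'') \mapsto c''$. A Mackey-style decomposition of $\tau \circ \iota_*$ along the pullback of $q$ by $\iota$ expresses $\rho \circ \sigma$ as $\mathrm{id} + (\text{correction})$, with the correction factoring through a destabilization-type map valued in $H_*(\Sigma; \Z[1/2]) \hookrightarrow H_*(C_k(M); \Z[1/2])$; the plan is to show this term vanishes after inverting $2$. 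The range of isomorphism $i \leq f(k,M,\Z[1/2])$ then follows by identifying $\iota$ with the classical McDuff stabilization on the open manifold $M \setminus \{m_0\}$, where it is known to be a homology isomorphism in the stated range.

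The main obstacle is producing the Gysin splitting rigorously: verifying that the flow-averaging construction descends to a well-defined map on homology, and that the sign ambiguity in the trivialization of the normal bundle of $\Sigma$ is exactly cancelled by inverting $2$. Once that is established, the Mackey calculation for the left inverse and the comparison with McDuff stability are relatively routine diagram chases.
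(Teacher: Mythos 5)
Your construction has a genuine gap at the very first step. You propose to split the open inclusion $j\colon C_k(M\setminus\{m_0\})\hookrightarrow C_k(M)$ on $\Z[1/2]$-homology and claim it is an isomorphism in the stable range, and then set $\sigma:=\iota_*\circ s$. Neither claim is correct. Already for $k=1$ and $i=n$ one has $H_n(M\setminus\{m_0\};\Z[1/2])=0$ while $H_n(M;\Z[1/2])\neq 0$ when $M$ is closed and orientable, so $j_*$ has no section. More fundamentally, removing a point changes the homotopy type of $M$, and the \emph{stable} homologies of $C_k(M\setminus\{m_0\})$ and $C_k(M)$ differ (compare the section spaces appearing in the scanning picture), so $j_*$ is not stably an isomorphism either. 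Also, the ``orientation ambiguity'' you invoke is not the obstruction: the normal bundle of $\Sigma\cong C_{k-1}(M\setminus\{m_0\})$ is \emph{canonically} trivial (it is $\Sigma\times T_{m_0}M$), so the Thom isomorphism holds integrally with no sign problem. The actual obstruction to splitting the long exact sequence of the pair is the connecting map, which by Proposition~\ref{cofiber} is $t_S$ (multiplication by the fundamental class of $S^{n-1}$), and this is generally nonzero even with $2$ inverted. The proposed ``flow-averaging over $\pm V$'' does not repair this: the time-$\epsilon$ flow of $V$ moves \emph{all} $k$ particles, not just the one at $m_0$, so it does not define a retraction onto $C_k(M\setminus\{m_0\})$ (or any chain-level retraction) in any evident way.

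The paper sidesteps all of this by not trying to push a colliding particle off $m_0$ at all. Instead it enlarges the target: the inclusion $\iota\colon C(M)\to\mr{Sym}^{\leq 2}(M)$ into the bounded symmetric product is shown to be a $\Z[1/2]$-homology equivalence via the Leray spectral sequence, because the relevant stalks are $H^*(C_2(\R^n);A)\cong H^*(\R P^{n-1};A)$, which is trivial for $n$ odd and $A$ a $\Z[1/2]$-module. The map $Q\colon C_k(M)\to\mr{Sym}^{\leq 2}_{k+1}(M)$ that adds a point at $m_0$ is then everywhere defined (a double point at $m_0$ is permitted), and $\sigma:=(\iota_*)^{-1}\circ Q_*$. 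Stability is deduced from Lemma~\ref{open} on the open levels of an augmented semisimplicial resolution. Your transfer idea for split injectivity is genuinely close to the paper's argument, but the conclusion is misstated: the ``correction term'' is $\sigma_{k-1}\circ\tau_{k-1,k}$, not something that vanishes after inverting $2$; this is precisely the recursion that Dold's Lemma~2.2 is designed to resolve, and you need the full family of transfers $\tau_{k,l}$, not just the single one for $l=k+1$.
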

 
The map $\sigma$ will be defined as a zig-zag of maps of spaces and hence the cohomological analogue of $\sigma$ respects cup products. As mentioned before, Cantero and Palmer proved homological stability with $\bZ[1/2]$ coefficients for odd dimensional manifolds in Theorem A of \cite{federicomartin}. However, since they use the scanning map of \cite{Mc1}, they can only define a map between homology groups in the stable range. We hope that considering maps like $\sigma$ will have applications to other contexts where there are no obvious stabilization maps.
 
In our proofs, we shall always assume that the dimension of $M$ is at least $2$. Theorem \ref{TheoremA} and Theorem \ref{TheoremB} are also true if $M$ is one-dimensional since $C_k(\R) \simeq *$ and $C_k(S^1) \simeq S^1$.
 
\subsection*{Acknowledgments} We would like to thank Federico Cantero, James McClure, Martin Palmer and the referee for helpful comments.

\section{Periodicity}
In this section we prove Theorem \ref{TheoremA}. We start by recalling some algebraic structure of configuration spaces. Let $n$ be the dimension of the manifold $M$. Let $C(M) = \bigsqcup_k C_k(M)$. An embedding $\bigsqcup_i M_i \hookrightarrow M$ induces a map $\prod_i C(M_i) \m C(M)$. This structure induces an $E_d$-algebra structure on $C(N \times \R^d)$. We will define the stabilization map $t$ of \cite{Mc1} using this functoriality with respect to embeddings.

By definition, an open manifold is a manifold with no compact connected components. If $M$ is open, there is an embedding $\R^n \sqcup M \hookrightarrow M$ such that $M \hookrightarrow M$ is isotopic to the identity (see for example \cite{kupersmillerimprov}). This induces a map $C(\R^n) \times C(M) \m C(M)$. By picking a point in $C_1(\R^n)$ and restricting, this gives a map $t:C_k(M) \m C_{k+1}(M)$ called the \emph{stabilization map}. If $M$ is the interior of a manifold with finite handle decomposition, the stabilization map depends only on the choice of end of $M$. In the introduction we mentioned that $t$ induces an isomorphism in a range. Currently the best known results in this direction are the following \cite{Mc1,Se,Ch,RW,Kn,kupersmillerimprov,federicomartin}.

\begin{theorem} Let $M$ be an open connected $n$-dimensional manifold and $R$ be a ring. The stabilization map $t:C_k(M) \m C_{k+1}(M)$ induces an isomorphism on $H_i(-;R)$ for $i \leq f(k,M,R)$. The function $f(k,M,R)$ is called the stable range and the current best known bounds on it are: \[f(k,M,R)=\begin{cases}
k & \text{if 2 is a unit in $R$ and $dim(M) \geq 3$}\\
k & \text{if $R$ is a field of characteristic zero and $M$ is a non-orientable surface}\\
k-1 & \text{if $R$ is a field of characteristic zero and $M$ is an orientable surface}\\
k/2 & \text{otherwise} \\
\end{cases}\]
\label{openConfig}
\end{theorem}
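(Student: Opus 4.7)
The statement is a compilation of results from the cited papers, so my plan is a case-by-case attack in which each case is reduced to a known stability machine, and then to explain the main new ingredient needed for each range. The unifying strategy in every case is to compare the stabilization map $t$ to a map whose limit behavior is understood (either the scanning map of McDuff, or a simplicial resolution built from the arc/surgery complex), and to bound how many stages of that comparison fail to be an equivalence in a given homological degree.

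First I would establish the universal $k/2$ bound. The stabilization map $t$ sits in a commutative square with McDuff's scanning map $\alpha:C(M)\to \Gamma(TM^+)$, where the right-hand side is a section space whose homotopy type is known. The group-completion theorem identifies $\alpha$ with $C(M)\to\Omega B C(M)$ after stabilizing by the $E_1$-multiplication that $t$ provides at an end, and an inspection of the homology spectral sequence associated to the bar construction on the free $E_1$-algebra generated by $C(M)$ yields the slope-$\tfrac12$ range; this is the argument of McDuff--Segal as reworked by Randal-Williams. The surface cases (orientable and non-orientable) are then obtained by replacing the bar resolution with the augmented semi-simplicial space of non-intersecting arcs from infinity to the configuration points, whose high connectivity is proved in Randal-Williams for the non-orientable case and Church in the orientable case; a spectral sequence computation gives range $k$ (respectively $k-1$) over a field of characteristic zero.

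For the sharpened range $k$ with $2$ inverted and $\dim M\geq 3$, I would follow the Kupers--Miller approach: view $C(N\times\R^n)$ as an $E_n$-algebra, use the arc resolution (or equivalently the cellular $E_n$-algebra splitting) to express the cofiber of $t$ in terms of iterated $E_n$-brackets, and then observe that in dimension $\geq 3$ the Browder bracket $[x,x]$ is $2$-torsion; inverting $2$ kills all the obstruction classes contributing below degree $k+1$. This step, combined with a transfer argument along the $\Sigma_2$-cover $F_2(M)\to C_2(M)$, is what pushes the slope from $\tfrac12$ to $1$.

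The main obstacle I would expect is the bookkeeping in the last step: converting the vanishing of the $2$-torsion obstructions into a concrete bound requires tracking the bidegrees of the cells in the $E_n$-resolution and checking that every class of homological degree $\leq k$ in $\coker(t_*)$ is detected by a bracket whose inputs already lie in the stable range by induction. Everything else is a matter of quoting the right connectivity estimate and running a standard spectral sequence comparison.
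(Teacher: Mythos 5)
The paper does not prove this statement: Theorem \ref{openConfig} is a compilation of known results, cited to \cite{Mc1,Se,Ch,RW,Kn,kupersmillerimprov,federicomartin}, and the authors simply quote it. So there is no ``paper's proof'' to measure yours against. Evaluating your sketch on its own terms, however, the third case contains a genuine error.

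You justify the slope-$1$ range for $2$ invertible and $\dim M \geq 3$ by saying that ``in dimension $\geq 3$ the Browder bracket $[x,x]$ is $2$-torsion.'' This is only true in odd dimensions. For the point class $P \in H_0(C_1(\R^n))$, the graded antisymmetry of the $E_n$-Browder bracket gives $[P,P] = (-1)^n[P,P]$, so $2[P,P]=0$ precisely when $n$ is odd; indeed, the paper's own Proposition \ref{cohen} records that $\phi(P,P)$ is twice a generator of $H_{n-1}(\R P^{n-1}) \cong \Z$ for even $n$, which is not torsion. But the theorem's first clause has no parity restriction -- it asserts the range $k$ for \emph{all} open manifolds of dimension at least $3$ once $2$ is inverted -- so your mechanism silently drops every even-dimensional manifold of dimension $\geq 4$. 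The actual argument in \cite{kupersmillerimprov} does not go via Browder brackets; it compares configuration spaces to bounded symmetric products via a transfer/resolution argument in which the improvement comes from a connectivity estimate valid for all $n\geq 3$ once $2$ is invertible, not from torsion in a single bracket class. Relatedly, the ``cellular $E_n$-algebra splitting'' language you invoke belongs to later work and is not the route taken in the cited source, and the surface cases you attribute to arc-resolution connectivity are in fact obtained by quite different means (representation stability in the orientable characteristic-zero case). The $k/2$ and surface cases of your sketch are reasonable in outline, but the slope-$1$ case needs a different idea than the one you gave.
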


Now suppose $M$ is closed and fix $m_0 \in M$. There are embeddings
\[\R^n \sqcup (M\backslash\{m_0\}) \hookrightarrow M\backslash\{m_0\} \qquad \text{and} \qquad (S^{n-1} \times \R) \sqcup (M\backslash\{m_0\}) \hookrightarrow M\backslash\{m_0\}\]
which give $C(M \backslash \{m_0\})$ the structure of an $E_1$-module over the $E_1$-algebras $C(\R^n)$ and $C(S^{n-1} \times \R)$. This induces maps
\begin{align*}H_*(C(\R^n)) \otimes H_*(C(M \backslash \{m_0\})) &\m H_*(C(M \backslash \{m_0\})) \\ H_*(C(S^{n-1} \times \R)) \otimes H_*(C(M \backslash \{m_0\})) &\m H_*(C(M \backslash \{m_0\}))\end{align*}

\noindent Given $b \in H_j(C(S^{n-1} \times \R))$ or $H_j(C(\R^n))$, let $t_b:H_i(C(M\backslash\{m_0\}) \m H_{i+j}(C(M\backslash\{m_0\})$ be the map induced by multiplication by the homology class $b$. Let $P$ denote the class of a point in $H_0(C_1(S^{n-1} \times \R))$ or $H_0(C_1(\R^n))$, so that $t_P$ is the map induced by the stabilization map $t$. On Page 321 of \cite{RW} (Page 15 of the arXiv-version), Randal-Williams makes the following observation.

\begin{proposition}[Randal-Williams]
There is a cofiber sequence
\[C_k(M) \m \Sigma^n C_{k-1}(M\backslash\{m_0\})_+ \m \Sigma C_k(M\backslash\{m_0\})_+\]
The map induced on homology $H_i(C_{k-1}(M\backslash\{m_0\})) \m H_{i+n-1}(C_k(M\backslash\{m_0\}))$ is given by $t_S$ with $S$ the image of the fundamental class of $S^{n-1}$ under the homotopy equivalence $S^{n-1} \m C_1(S^{n-1} \times \R)$.
\label{cofiber}
\end{proposition}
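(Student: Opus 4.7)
The plan is to realise $C_{k-1}(M\setminus\{m_0\})$ as a codimension-$n$ closed submanifold of $C_k(M)$ with trivial normal bundle, and then apply a Pontryagin--Thom collapse.

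First, I would consider the closed subspace $\Sigma := \{c \in C_k(M) : m_0 \in c\}\subset C_k(M)$. The map $c \mapsto c \setminus \{m_0\}$ is a homeomorphism $\Sigma \cong C_{k-1}(M\setminus\{m_0\})$, with open complement exactly $C_k(M\setminus\{m_0\})$. Choosing a chart $\phi\colon\R^n\hookrightarrow M$ centred at $m_0$ shows that near any $c''\cup\{m_0\}\in\Sigma$ the pair $(C_k(M),\Sigma)$ is locally modelled on $(C_{k-1}(M\setminus\{m_0\})\times\R^n,\;C_{k-1}(M\setminus\{m_0\})\times\{0\})$, so $\Sigma$ is a locally flat codimension-$n$ submanifold of the topological manifold $C_k(M)$. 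Its normal bundle has fibre $T_{m_0}M$ at every point $c''\cup\{m_0\}$, and this fibre is independent of $c''$, so the normal bundle is globally trivial of rank $n$.

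Next, I would invoke the tubular neighbourhood / Pontryagin--Thom construction to identify the homotopy cofibre of $C_k(M\setminus\{m_0\})\hookrightarrow C_k(M)$ with the Thom space of the normal bundle of $\Sigma$; by triviality this Thom space is $\Sigma^n C_{k-1}(M\setminus\{m_0\})_+$. Rotating the resulting Puppe sequence
\[C_k(M\setminus\{m_0\})_+ \longrightarrow C_k(M)_+ \longrightarrow \Sigma^n C_{k-1}(M\setminus\{m_0\})_+\]
one step forward yields the cofibre sequence in the statement.

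Finally, to identify the connecting map as $t_S$: in the Puppe rotation, the connecting map is (up to suspension) induced by the attaching map of the boundary sphere bundle of the tubular neighbourhood, namely
\[C_{k-1}(M\setminus\{m_0\}) \times S^{n-1} \longrightarrow C_k(M\setminus\{m_0\}),\qquad (c'',v)\mapsto c''\cup\{\phi(\epsilon v)\},\]
which adjoins to $c''$ a point on a small sphere around $m_0$. Identifying a small annular neighbourhood of $m_0$ with $S^{n-1}\times\R$, this is precisely the $E_1$-module action of the fundamental class of the $S^{n-1}$-factor, i.e.\ the operation $t_S$. The main obstacle is the bookkeeping in this last step: one must track orientations carefully to ensure the boundary sphere of the Pontryagin--Thom tube carries the same orientation as the $S^{n-1}$ used to define $S\in H_{n-1}(C_1(S^{n-1}\times\R))$, so that the identification with $t_S$ holds on the nose rather than up to a sign.
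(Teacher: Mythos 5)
The paper does not prove this proposition itself; it cites Randal-Williams (p.~321 of \cite{RW}) and states the result. Your proof is correct and is essentially the argument Randal-Williams gives: identify the locus $\Sigma\subset C_k(M)$ of configurations containing $m_0$ with $C_{k-1}(M\setminus\{m_0\})$, observe its complement is $C_k(M\setminus\{m_0\})$ and that it is locally flat of codimension $n$ with trivialized normal bundle, collapse the tubular neighbourhood to get the Thom space $\Sigma^n C_{k-1}(M\setminus\{m_0\})_+$ as the cofibre, rotate the Puppe sequence, and identify the connecting map with the sphere-bundle inclusion, which is exactly the $C(S^{n-1}\times\R)$-module action $t_S$. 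Two small points worth flagging: (1) the first term of the displayed cofibre sequence should really be $C_k(M)_+$ for the Puppe rotation to be literally correct (the paper's statement elides the disjoint basepoint, which is harmless on reduced homology but you should say so); and (2) since $M$ is only assumed to be a topological manifold, you should be slightly careful to say that local flatness and the tubular neighbourhood follow by working in a single chart around $m_0$ — your sentence already indicates this, but making the chart-independence of the resulting trivialization explicit would tighten the orientation bookkeeping you correctly identify as the delicate part.
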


Let $b \in H_j(C_q(\R^n))$. The following diagram does not commute in general:
\[\xymatrix{H_i(C_{k-1}(M \backslash \{m_0\})) \ar[r]^{t_S} \ar[d]_{t_b} & H_{n-1+i}(C_{k}(M \backslash\{m_0\})) \ar[d]_{t_b} \\
H_{i+j}(C_{k-1+q}(M \backslash\{m_0\})) \ar[r]^{t_S} & H_{n-1+i+j}(C_{k+q}(M \backslash\{m_0\})) }\]
The failure of the diagram to commute is measured by the Browder operation, a homology operation for $E_n$-algebras, introduced in \cite{Br}. The following is an easy generalization of Lemma 9.2 of \cite{RW}.

\begin{proposition}
Let $P \in H_0(C_1(\R^n))$ be the class of a point and let $\phi$ denote the Browder operation. For any $b \in H_j(C_q(\R^n))$, we have that $t_S \circ t_b=t_b \circ t_S + t_{\phi(b,P)}$.

\end{proposition}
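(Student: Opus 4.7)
The plan is to reduce the identity to a commutator computation in the Pontryagin ring $H_*(C(S^{n-1}\times\R))$, where it follows by a mild geometric extension of Lemma~9.2 of \cite{RW}.

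First, I choose the embedding $\R^n \hookrightarrow M\setminus\{m_0\}$ defining the $C(\R^n)$-module structure on $C(M\setminus\{m_0\})$ so that it factors through an embedding $\iota\colon \R^n \hookrightarrow S^{n-1}\times\R$ as a small open ball inside the chosen collar of $m_0$. Since $M\setminus\{m_0\}$ is connected of dimension at least $2$, any two such embeddings are isotopic, so this choice does not change the operator $t_b$ up to homotopy. With this setup, the $C(\R^n)$-action on $C(M\setminus\{m_0\})$ factors through the $C(S^{n-1}\times\R)$-action via the induced $E_1$-algebra map $\iota_*\colon C(\R^n) \to C(S^{n-1}\times\R)$, so for every $b \in H_*(C(\R^n))$ the operator $t_b$ coincides with multiplication by $\iota_* b$ in the $C(S^{n-1}\times\R)$-module structure.

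Second, since this module structure is associative, I obtain
\[t_S \circ t_b - t_b \circ t_S = t_{[S,\,\iota_* b]},\]
where $[-,-]$ denotes the commutator for the Pontryagin (stacking) product on $H_*(C(S^{n-1}\times\R))$. The theorem therefore reduces to the Pontryagin-ring identity $[S,\iota_* b] = \iota_* \phi(b, P)$.

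Third, this last identity is exactly Lemma~9.2 of \cite{RW} in the case $b = P$: there, the commutator $S \cdot P - P \cdot S$ is computed by moving a single particle past the equatorial $(n-1)$-sphere via an $S^{n-1}$-family of paths, which by definition yields the Browder bracket $\phi(P, P)$. For a general class $b \in H_j(C_q(\R^n))$, I represent $b$ by a cycle supported in the small ball $\iota(\R^n) \subset S^{n-1}\times\R$ and run the same geometric argument: sliding this entire $b$-cycle past the sphere supporting $S$ through the $S^{n-1}$-family of isotopies produces precisely the Browder pairing $\phi(b, P)$. I expect this step, extending Lemma~9.2 from the case $b = P$ to an arbitrary cycle $b$, to be the only (mild) technical obstacle, and it is what the author refers to as an \emph{easy generalization}.
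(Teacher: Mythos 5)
Since the paper offers no proof of this proposition---it only says "an easy generalization of Lemma~9.2 of \cite{RW}"---there is nothing to compare against line by line, but your proof correctly carries out what that phrase alludes to, and does so in a pleasantly structured way. The reduction in Step~2 is the right observation: since both operator systems come from $E_1$-module structures and the $C(\R^n)$-action is factored through the $C(S^{n-1}\times\R)$-action, associativity of the module action on homology gives $t_S\circ t_b - t_b\circ t_S = t_{S\cdot\iota_*b - \iota_*b\cdot S}$, and the proposition is then equivalent to the $M$-independent Pontryagin-ring identity $S\cdot\iota_*b - \iota_*b\cdot S = \iota_*\phi(b,P)$ in $H_*(C(S^{n-1}\times\R))$. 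Step~3, establishing that identity by sliding a cycle representing $b$ (supported in a small ball) past the sphere that carries $S$, is the same geometric argument used by Randal-Williams for $b=P$, and it genuinely goes through unchanged for arbitrary $b$, which is presumably why the authors call it easy. Two small caveats: first, the assertion that "any two such embeddings are isotopic'' is not literally true for embeddings of $\R^n$ into a connected manifold (orientation may obstruct it); one should instead just \emph{choose} the ball embedding to factor through the collar in the first place, which is harmless since the paper leaves the embeddings as a choice, and in any case the target identity is insensitive to this. Second, $\iota_*$ need not be an $E_1$-algebra map for a generic small-ball embedding---you only need that $t_b$ is multiplication by $\iota_*b$ in the collar module structure, which holds without that stronger claim; you may wish to drop the "$E_1$-algebra map'' phrasing. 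Neither affects the validity of the argument.
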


See Figure \ref{figure:diagramCommute} for a picture when $b$ is the class of $3$ points. The following corollary is immediate.
\begin{figure}[t]
\begin{center}\includegraphics[width=\textwidth]{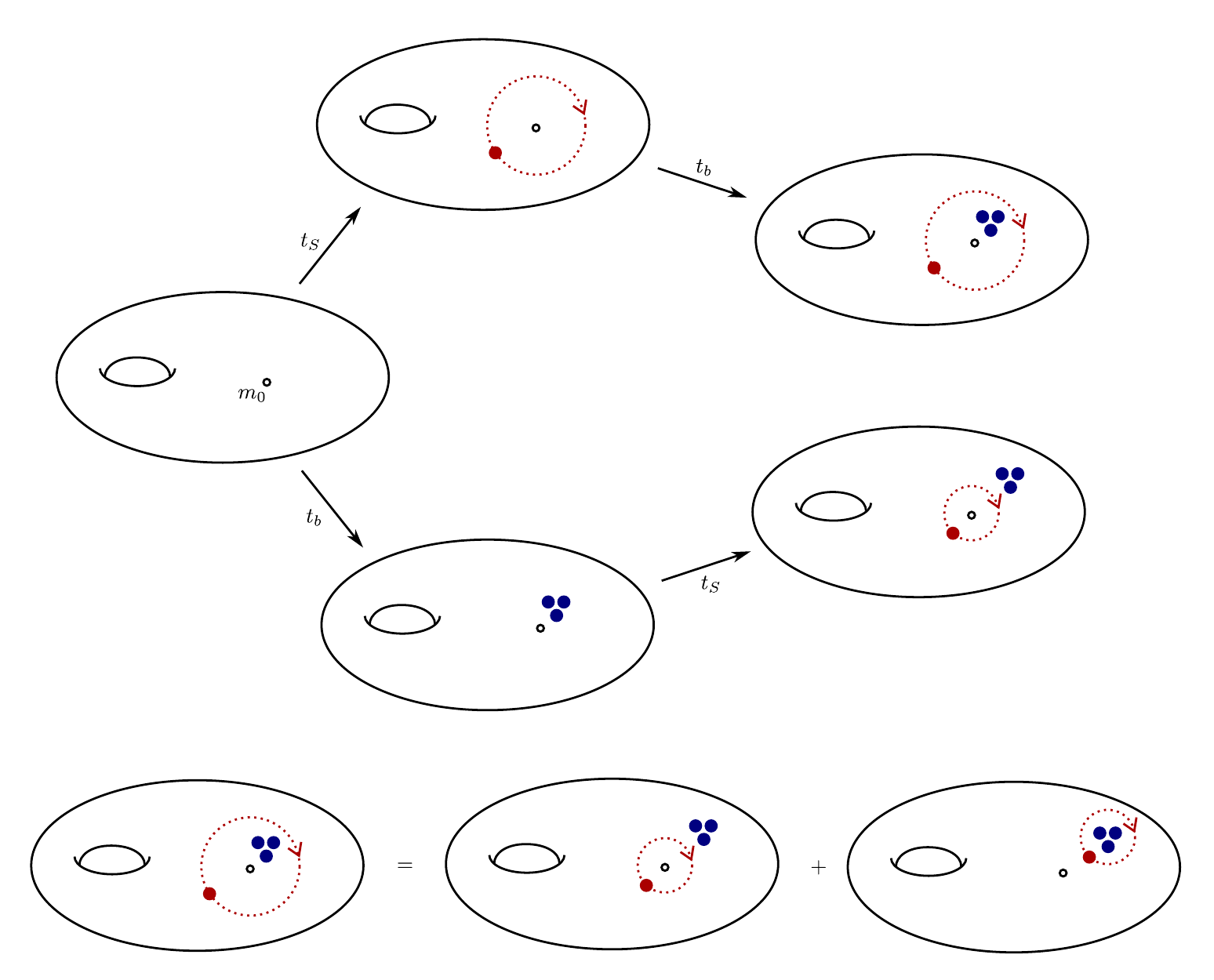}\end{center}
\caption{The difference between $t_S \circ t_b$ and $t_b \circ t_S$ for $b=P^3$.}
\label{figure:diagramCommute}
\end{figure}

\begin{corollary} \label{corCommute}
Let $R$ be a ring and let $b \in H_j(C_q(\R^n);R)$ be an element such that $\phi(P,b)=0$. Then the following diagram commutes:
\[\xymatrix{H_i(C_{k-1}(M\backslash \{m_0\};R)) \ar[r]^{t_S} \ar[d]_{t_b} & H_{n-1+i}(C_{k}(M\backslash\{m_0\});R) \ar[d]_{t_b} \\
H_{i+j}(C_{k-1+q}(M\backslash \{m_0\});R) \ar[r]^{t_S} & H_{n-1+i+j}(C_{k+q}(M \backslash \{m_0\});R) }\]
\end{corollary}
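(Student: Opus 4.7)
The plan is to read the corollary as a direct consequence of the displayed equation in the preceding proposition together with a symmetry property of the Browder operation. By the proposition, the two compositions around the square differ exactly by the operator $t_{\phi(b,P)}$, so to prove commutativity it suffices to show that $\phi(b,P) = 0$. The hypothesis, however, is stated as $\phi(P,b) = 0$, with the arguments in the opposite order; thus the one non-automatic step is to go from $\phi(P,b) = 0$ to $\phi(b,P) = 0$.

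For this, I would invoke the graded antisymmetry of the Browder bracket on the homology of an $E_n$-algebra: $\phi(x,y) = \pm\, \phi(y,x)$ with a sign depending on $n$ and the degrees of $x$ and $y$ (this is classical, and is for instance part of the Gerstenhaber structure on $H_*$ of an $E_n$-algebra for $n \geq 2$). Applying this to $x = P$ and $y = b$ gives $\phi(b,P) = \pm \phi(P,b) = 0$. Since the operator $t_c$ is $R$-linear in the class $c$, we get $t_{\phi(b,P)} = 0$, and then the proposition reduces to $t_S \circ t_b = t_b \circ t_S$, which is exactly the commutativity of the square.

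There is essentially no obstacle here: once one accepts the preceding proposition and the standard antisymmetry of the Browder operation, the corollary is a one-line computation, which is why the authors call it ``immediate''. If one wanted to be fully careful, the only thing to double-check is that the sign convention used in the definition of $\phi$ really does yield $\phi(b,P) = \pm \phi(P,b)$ in the relevant grading, but this is a routine verification against the definition in \cite{Br}.
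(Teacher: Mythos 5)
Your proof is correct and is exactly what the paper leaves implicit when it calls the corollary ``immediate'': the preceding proposition gives $t_S \circ t_b = t_b \circ t_S + t_{\phi(b,P)}$, and the one small wrinkle (the hypothesis is stated as $\phi(P,b)=0$ while the error term involves $\phi(b,P)$) is resolved, as you say, by the graded antisymmetry of the Browder bracket, which makes $\phi(P,b)$ and $\phi(b,P)$ vanish simultaneously. You have correctly identified and dispatched the only non-automatic step.
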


The following is implicit in Part III of \cite{CLM}.

\begin{proposition}[Cohen] If $n$ is even, then for any $m$ we have that $\phi(P,P^m)$ vanishes with $\Z/d\Z$ coefficients for $d$ dividing $2m$. If $n$ is odd, $\phi(P,P)$ vanishes with $\Z$ coefficients. Here the exponents indicate iterated Pontryagin product. \label{cohen}
\end{proposition}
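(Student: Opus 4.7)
The plan is to reduce both parts of the proposition to the computation of $\phi(P,P) \in H_{n-1}(C_2(\bR^n);\bZ)$, and then to propagate to higher Pontryagin powers using the derivation property of the Browder bracket.

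First I would identify $C_2(\bR^n) \simeq \bR P^{n-1}$ by sending an unordered pair of points to the projectivized direction between them. Under this equivalence the class $\phi(P,P)$ should correspond, up to sign, to the image of the fundamental class of $S^{n-1}$ under the standard double cover $S^{n-1} \to \bR P^{n-1}$; this is the geometric content of the Browder bracket for $E_n$-algebras, so it is essentially a definition-chase via the unit vector realization of $\phi$.

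For $n$ odd, $n-1$ is even, so $\bR P^{n-1}$ is non-orientable and $H_{n-1}(\bR P^{n-1};\bZ) = 0$. Hence $\phi(P,P)$ vanishes integrally for trivial reasons, which settles the odd-dimensional case. For $n$ even, $\bR P^{n-1}$ is orientable with $H_{n-1}(\bR P^{n-1};\bZ) \cong \bZ$, and the double cover $S^{n-1} \to \bR P^{n-1}$ has degree $\pm 2$, so $\phi(P,P) = \pm 2\alpha$ for a generator $\alpha$. To control $\phi(P,P^m)$, I would invoke the graded Leibniz rule: the Browder bracket is a derivation in each variable with respect to the Pontryagin product. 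An easy induction on $m$ then gives $\phi(P,P^m) = m\cdot P^{m-1}\cdot \phi(P,P) = \pm 2m\cdot P^{m-1}\cdot \alpha$, which is killed by any $d$ dividing $2m$.

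The main obstacle is sign bookkeeping: the graded derivation rule depends on the parity of $n-1$ and on the degrees of the inputs, and the identification of $\phi(P,P)$ with the double cover involves a sign that must be compatible with the conventions used in the Leibniz rule. Rather than re-derive these from scratch, I would cite the relevant formulas in Part III of \cite{CLM}, where both ingredients appear implicitly; the proposition is then assembled from these by the two-step reduction above.
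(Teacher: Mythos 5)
Your proposal is correct and follows essentially the same route as the paper: identify $C_2(\bR^n)\simeq \bR P^{n-1}$, compute $\phi(P,P)$ there, and use the derivation property to get $\phi(P,P^m)=mP^{m-1}\phi(P,P)$. The only cosmetic difference is in the $n$ odd case, where you invoke $H_{n-1}(\bR P^{n-1};\bZ)=0$ outright (which is arguably the cleaner phrasing), while the paper words it as ``twice the generator of $H_{n-1}(\bR P^{n-1})$ is zero''; both lead to the same conclusion.
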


\begin{proof}
Since $\phi$ is a derivation of the product in each variable, $\phi(P,P^m)= m P^{m-1}\phi(P,P)$. We have that $C_2(\R^n) \simeq \R P^{n-1}$ and $\phi(P,P) \in H_{n-1}(C_2(\bR^n))$ corresponds to twice the generator of $H_{n-1}(\R P^{n-1})$. If $n$ is odd, then twice the generator of $H_{n-1}(\R P^{n-1})$ is zero and hence so is $\phi(P,P)$. If $n$ is even, then $\phi(P,P^m)$ is $2m$ times another homology class and hence vanishes with $\Z/d\Z$ coefficients for $d$ dividing $2m$. \end{proof}

Randal-Williams used the second result in the case $m=1$ to prove stability for configuration spaces with field coefficients when $M$ is odd-dimensional and used the first result in the case $m=1$ to prove stability with $\Z/2\Z$ coefficients. We will use the first result to prove eventual periodicity and the second result to prove homological stability. Before we prove Theorem \ref{TheoremA}, we need the following algebraic lemma.

\begin{lemma}
Let $C_*$ be a bounded below chain complex of finitely generated free abelian groups and $p$ be a prime. Then the isomorphism type of the group $H_i(C_*  \otimes \Z/p^r \Z)$ is determined uniquely by the cardinality of the sets $H_j(C_* \otimes \Z/p^w\Z)$ for $w \leq r$ and $j \leq i$.

Similarly, the isomorphism type of the group $H_i(C_*)$ is determined uniquely by the cardinality of the sets $H_j(C_* \otimes \bZ/p^r \bZ)$ for all $p$ prime, $r \geq 1$ and $j \leq i$.
\label{algebralemma}
\end{lemma}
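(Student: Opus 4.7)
The plan is to combine the universal coefficient theorem with the structure theorem for finitely generated abelian groups, tracking the multiplicities of cyclic summands of each order. Since each $C_i$ is finitely generated and free, every $H_i(C_*)$ is finitely generated, so we may write
\[ H_i(C_*) \cong \Z^{\beta_i} \oplus \bigoplus_{p,\,k \geq 1} (\Z/p^k\Z)^{a_{i,k}^{(p)}} \]
with only finitely many $a_{i,k}^{(p)}$ nonzero. The universal coefficient theorem supplies
\[ H_i(C_* \otimes \Z/p^w\Z) \cong \bigl(H_i(C_*) \otimes \Z/p^w\Z\bigr) \oplus \mr{Tor}\bigl(H_{i-1}(C_*), \Z/p^w\Z\bigr), \]
and since $\Z/p^k\Z \otimes \Z/p^w\Z \cong \mr{Tor}(\Z/p^k\Z, \Z/p^w\Z) \cong \Z/p^{\min(k,w)}\Z$, one computes
\[ \log_p\bigl|H_i(C_* \otimes \Z/p^w\Z)\bigr| = w\beta_i + \sum_{k \geq 1} \min(k,w)\,c_{i,k}^{(p)}, \qquad c_{i,k}^{(p)} := a_{i,k}^{(p)} + a_{i-1,k}^{(p)}. \]

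For the first statement I observe that, as an abelian group, $H_i(C_* \otimes \Z/p^r\Z)$ is determined by the multiplicities $c_{i,k}^{(p)}$ for $k<r$ together with the single residual $\beta_i + \sum_{k \geq r} c_{i,k}^{(p)}$: the former counts the $\Z/p^k\Z$ summands with $k<r$, while the latter counts the $\Z/p^r\Z$ summands coming from the free part of $H_i$ and from $\Z/p^k\Z$ summands with $k \geq r$ in both $H_i$ and $H_{i-1}$. From the displayed identity, first differences in $w$ return $\beta_i + \sum_{k \geq w} c_{i,k}^{(p)}$, second differences return $c_{i,w}^{(p)}$ for $w < r$, and the first difference at $w = r$ supplies the residual. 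Hence the cardinalities at level $i$ for $w \leq r$ already recover all needed invariants — no information from lower $j$ is actually required.

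For the second statement I induct on $i$, the base case being provided by the bounded below hypothesis (for $i$ small enough $H_{i-1}(C_*) = 0$, so $a_{i-1,k}^{(p)} = 0$). Assuming the $a_{i-1,k}^{(p)}$ known, subtracting them from $c_{i,k}^{(p)}$ — determined for arbitrarily large $r$ by the previous paragraph — recovers every $a_{i,k}^{(p)}$; then letting $r \to \infty$ the residual $\beta_i + \sum_{k \geq r} a_{i,k}^{(p)}$ stabilizes at $\beta_i$ by finiteness of support, so $\beta_i$ is determined as well. Together with the $a_{i,k}^{(p)}$ across all primes $p$ this pins down $H_i(C_*)$. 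The only subtle point in the argument is that for the second statement induction on $i$ is unavoidable, precisely because the universal coefficient formula mixes the $H_i$ and Tor-of-$H_{i-1}$ contributions and one needs the inductive knowledge of the latter in order to isolate the former.
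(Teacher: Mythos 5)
Your proof is correct, and it takes a genuinely different and more elementary route than the paper's. The paper works directly over $\Z/p^r\Z$: it decomposes $C_* \otimes \Z/p^r\Z$ into shifts of elementary two-term complexes $A^{(s)}_*$ (a Smith-normal-form argument over $\Z/p^r\Z$) and recovers the multiplicities by inverting the $(r\times r)$-matrix with entries $\min(w,s)$, inducting on homological degree; for the second part it upgrades the first part to recover the reduction maps $H_i(C_*\otimes\Z/p^{w}\Z) \to H_i(C_*\otimes\Z/p^{w-1}\Z)$ as well, and then appeals to $p$-adic completion and the Mittag--Leffler criterion. You instead invoke the universal coefficient theorem over $\Z$ together with the structure theorem for finitely generated abelian groups, and extract the relevant invariants by taking first and second finite differences of $\log_p \bigl|H_i(C_*\otimes\Z/p^w\Z)\bigr|$ in $w$; for the second part a direct induction on $i$ disentangles the $H_i$- and $\mr{Tor}(H_{i-1},-)$-contributions, and finiteness of the torsion gives the Betti number as the eventually constant value of the residual. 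Two things your approach buys: for the first statement you only need the cardinalities at $j=i$ (the paper's induction on degree genuinely requires $j\leq i$), and the second statement is obtained without $p$-adics or Mittag--Leffler. One small notational slip in your last paragraph: the residual should read $\beta_i + \sum_{k\geq r} c_{i,k}^{(p)}$ rather than $\beta_i + \sum_{k\geq r} a_{i,k}^{(p)}$; the conclusion is unaffected since $c_{i,k}^{(p)} = a_{i,k}^{(p)} + a_{i-1,k}^{(p)}$ also has finite support in $k$.
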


\begin{proof}We will prove a slightly stronger version of the first part, namely that these cardinalities not only determine uniquely the groups $H_i(C_* \otimes \bZ/p^r\bZ)$, but also the maps $H_i(C_* \otimes \bZ/p^{w}\bZ) \to H_i(C_* \otimes \bZ/p^{w-1}\bZ)$ for $w \leq r$. 

Let $A^{(r)}_*$ be the chain complex with $A_0=\Z/p^r \Z$ and all other groups equal to $0$. For $0< s < r$, let $A^{(s)}_*$ be the chain complex with $A^{(s)}_i=\Z/p^r \Z$ for $i=0$ or $1$, all other groups equal to $0$ and the map $A^{(s)}_1 \to A^{(s)}_0$ given by multiplication by $p^s$. Since $C_*$ is finitely generated in each degree, $C_* \otimes \Z/p^r\Z$ is quasi-isomorphic to a direct sum of a finite number of shifts of $A^{(s)}_*$. The proof is the same as that for Exercise 43 in Section 2.2 of \cite{hatcherbook}, working over $\bZ/p^r\bZ$ instead of $\bZ$. Let $a_i^{(s)}$ be non-negative integers such that \[C_* \otimes \Z/p^r\Z \simeq \bigoplus_i \left (\bigoplus_{s=1}^{r} (A^{(s)}_*[i])^{\oplus a_i^{(s)}} \right ) \]
with $[i]$ denoting a shift in homological degree by $i$. The $a_i^{(s)}$ depend on $p$ and $r$, as well as $i$ and $s$, but we are suppressing that from the notation. 

We claim that to prove the stronger version of the first part, it suffices to determine the $a_i^{(s)}$. Clearly these determine the homology groups $H_i(C_* \otimes \bZ/p^r\bZ)$. Note that both $C_* \otimes \bZ/p^r\bZ$ and $\bigoplus_i (\bigoplus_{s=1}^{r} (A^{(s)}_*[i])^{\oplus a_i^{(s)}})$ are free over $\bZ/p^r\bZ$ in each degree. Thus we can tensor both with $\bZ/p^w\bZ$ over $\bZ/p^r\bZ$ and apply the universal coefficient spectral sequence to obtain 
\[C_* \otimes \Z/p^w\Z \simeq \bigoplus_i \left(\bigoplus_{s=1}^{r} (A^{(s)}_*[i] \otimes \bZ/p^w\bZ)^{\oplus a_i^{(s)}}\right)\]
for all $w \leq r$. Under these isomorphisms the map $H_i(C_* \otimes \bZ/p^{w}\bZ) \to H_i(C_* \otimes \bZ/p^{w-1}\bZ)$ is induced by the reduction map $\bigoplus_i  (\bigoplus_{s=1}^{r} (A^{(s)}_*[i] \otimes \bZ/p^w\bZ)^{\oplus a_i^{(s)}} ) \to \bigoplus_i  (\bigoplus_{s=1}^{r} (A^{(s)}_*[i] \otimes \bZ/p^{w-1}\bZ)^{\oplus a_i^{(s)}} )$. Thus we can conclude that the numbers $a_i^{(s)}$ also determine this map.

Let $\#$ denote the cardinality of a set. Take $1 \leq w \leq r$. The homology of this chain complex is easy to compute and satisfies the following equation:
\[\sum_{s=1}^{r} a_i^{(s)} \cdot \log_p(\# H_0(A^{(s)}_*;\Z/p^w \Z)) =  \log_p(\# H_i(C_* \otimes \Z/ p^w \Z)) - \sum_{s=1}^{r} a_{i-1}^{(s)} \cdot \log_p(\# H_1(A^{(s)}_*;\Z/p^w \Z))\]
Using $\log_p(\# H_0(A^{(s)}_*;\Z/p^w \Z)) =\min(w,s)$ and $\log_p(\# H_1(A^{(s)}_*;\Z/p^w \Z)) =w-\min(w,s)$, this equation simplifies to
\[\sum_{s=1}^{r}  a_i^{(s)} \cdot \min(w,s)  =  \log_p(\# H_i(C_* \otimes \Z/ p^w \Z))  - \sum_{s=1}^{r}  a_{i-1}^{(s)} \cdot (w - \min(w,s))\] Since the $(r \times r)$-matrix with $(w,s)$th entry given by $\min(w,s)$ is invertible, the above system of linear equations on $a_i^{(s)}$ has a unique solution. That is, $a^{(s)}_{i}$ are completely determined by $a^{(1)}_{i-1}, \ldots a^{(r)}_{i-1}$ and the cardinality of $H_i(C_* \otimes \Z/p^w\Z)$ for $w \leq r$ . Since $C_*$ is a bounded below chain complex, there exists a $J \in \bZ$ such that $a_j^{(s)}=0$ for $j \leq J$.  That we can determine the $a_i^{(s)}$ now follows by induction on the statement that the cardinalities of $H_j(C_* \otimes \Z/p^w\Z)$ for $1 \leq w \leq r$ and $J \leq j \leq i$ determine the numbers $a_j^{(s)}$ for $1 \leq s \leq r$ and $J \leq j \leq i$. 
\\

The second part is a consequence of the first part as follows. It suffices to show that the abelian groups $H_i(C_* \otimes \bZ/p^r \bZ)$ and maps $H_i(C_* \otimes \bZ/p^{r+1} \bZ) \to H_i(C_* \otimes \bZ/p^r \bZ)$ for all primes $p$ and $r \geq 1$, determine the free and $p$-torsion summands of $H_i(C_*)$. These summands can be recovered from $H_i(C_*)\otimes \bZ_{p}^\wedge$, where $\bZ_{p}^\wedge$ is the group of $p$-adic integers. Note $\bZ_p^\wedge$ is flat over $\bZ$, as it is torsion-free, so $H_i(C_*)\otimes \bZ_{p}^\wedge \cong H_i(C_* \otimes \bZ_{p}^\wedge)$. The $p$-adics are the limit of $\bZ/p^r \bZ$ and the maps $C_i \otimes \bZ/p^{r+1} \bZ \to C_i \otimes \bZ/p^r \bZ$ are all surjective. By the Mittag-Leffler condition we conclude that $H_i(C_* \otimes \bZ_{p}^\wedge) = \lim_r H_i( C_* \otimes \bZ/p^r \bZ)$.\end{proof}


\begin{proof}[Proof of Theorem \ref{TheoremA}] If $M$ is open, this follows from Theorem \ref{openConfig} using the universal coefficient theorem, so we can assume $M$ is closed. We start with the proof of part (i). To prove the statement for $d \in \bN,$ it suffices to prove the statement for $q = p^r$ with $p$ prime since homology with coefficients in a direct sum of abelian groups is naturally isomorphic to the direct sum of the homology groups with coefficients in each summand. We will assume that $p$ is odd for ease of notation. For $p=2$, one modifies the proof simply by replacing $\Z/q\Z$ with $\Z/2 q\Z$. Consider the long exact sequence \[\cdots \to H_{i+1-n}(C_{k-1}(M\backslash\{m_0\});\bZ/q\bZ) \overset{t_S}{\m} H_{i}(C_k(M\backslash\{m_0\});\bZ/q\bZ) \m H_{i}(C_k(M);\bZ/q\bZ) \to \cdots \] associated to the cofiber sequence of Proposition \ref{cofiber}. We obtain that
\begin{align*}
\# H_i(C_k(M);\bZ/q\bZ) = \#&\coker [H_{i+1-n}(C_{k-1}(M\backslash\{m_0\});\bZ/q\bZ) \overset{t_S}{\m} H_{i}(C_k(M\backslash\{m_0\});\bZ/q\bZ)] \\ &\cdot \#\ker [H_{i-n}(C_{k-1}(M\backslash\{m_0\});\bZ/q\bZ) \overset{t_S}{\m} H_{i-1}(C_k(M\backslash\{m_0\});\bZ/q\bZ) ]
\end{align*}

Note that $t_{P^q}: H_*(C_k(M\backslash\{m_0\});\bZ/q\bZ) \m H_*(C_{k+q}(M\backslash\{m_0\});\bZ/q\bZ) $ is an isomorphism in the stable range since $t_{P^{q}}$ is an iteration of the map on homology considered in Theorem \ref{openConfig}. Since $\phi(P,P^{q})=0$, the following diagram commutes: \[\xymatrix{H_{i+1-n}(C_{k-1}(M\backslash\{m_0\});\bZ/q\bZ) \ar[r]^{t_S} \ar[d]_{t_{P^{q}}} & H_{i}(C_{k}(M\backslash\{m_0\});\bZ/q\bZ) \ar[d]_{t_{P^{q}}} \\
H_{i+1-n}(C_{k-1+q}(M\backslash\{m_0\});\bZ/q\bZ) \ar[r]^{t_S} & H_{i}(C_{k+q}(M\backslash\{m_0\});\bZ/q\bZ) }\]

Thus, in the stable range, $t_{P^{q}}$ induces isomorphisms \begin{align*}\coker& [H_{i+1-n}(C_{k-1}(M\backslash\{m_0\});\bZ/q\bZ) \overset{t_S}{\m} H_{i}(C_k(M\backslash\{m_0\});\bZ/q\bZ)] \\ &\cong \coker [H_{i+1-n}(C_{k-1+q}(M\backslash\{m_0\});\bZ/q\bZ) \overset{t_S}{\m} H_{i}(C_{k+q}(M\backslash\{m_0\});\bZ/q\bZ)]\end{align*}
\begin{align*}\ker& [H_{i-n}(C_{k-1}(M\backslash\{m_0\});\bZ/q\bZ) \overset{t_S}{\m} H_{i-1}(C_k(M\backslash\{m_0\});\bZ/q\bZ) ] \\ &\cong \ker [ H_{i-n}(C_{k-1+q}(M\backslash\{m_0\});\bZ/q\bZ) \overset{t_S}{\m} H_{i-1}(C_{k+q}(M\backslash\{m_0\});\bZ/q\bZ)] \end{align*} 

Therefore, these kernels and cokernels have the same number of elements when you increase the number of particles by $q$. We conclude that $H_i(C_k(M);\Z/q\Z)$ and $H_i(C_{k+q}(M);\Z/q\Z)$ have the same number of elements in the stable range. Lemma \ref{algebralemma} applies since configuration spaces of a closed finite type manifold are homotopy equivalent to finite CW complexes and we can use cellular chains. Using the first part of that lemma we conclude that $H_i(C_k(M);\Z/q\Z)$ and $H_i(C_{k+q}(M);\Z/q\Z)$ are isomorphic in the stable range.\\

For part (ii), we use the same argument as above with coefficients in $\bZ/q\bZ$ with $t_P$ instead of $t_{P^q}$, with $q=p^r$. The input is now that $\phi(P,P)=0$ for $\bZ/q \bZ$ coefficients and the result is that $H_i(C_k(M);\bZ/q\bZ)$ and $H_i(C_{k+1}(M);\bZ/q\bZ)$ are isomorphic in the stable range. By the second part of Lemma \ref{algebralemma} we conclude that $H_i(C_k(M);\bZ)$ and $H_i(C_{k+1}(M);\bZ)$ are isomorphic.\end{proof}

\begin{remark} Let $p$ be an odd prime and fix a non-negative integer $i$. Theorem A of \cite{federicomartin} implies that the groups $H_i(C_k(M);\Z/p \Z)$ are all isomorphic to each other provided that $p$ does not divide $2k - \chi(M)$. On the other hand, Theorem \ref{TheoremA} of this paper implies that the groups $H_i(C_k(M);\Z/p \Z)$ are all isomorphic to each other provided that $p$ \emph{does} divide $2k - \chi(M)$. Thus, there are at most two distinct possible isomorphism types for the groups $H_i(C_k(M);\Z/p\Z)$ in the stable range. Using more consequences of Theorem A of \cite{federicomartin}, we can generalize to say that there are at most $r+1$ distinct possible isomorphism types for the groups $H_i(C_k(M);\Z/p^r\Z)$ in the stable range. 

When $\chi(M)$ is even, combining Theorem A of \cite{federicomartin} and Theorem \ref{TheoremA} of this paper implies that there are at most $r$ distinct possible isomorphism types for the groups $H_i(C_k(M);\Z/2^r\Z)$ in the stable range. These upper bounds are achieved since $H_1(C_k(S^2)) \cong \Z/(2k-2)\Z$ for $k \geq 1$.
\end{remark}

\begin{remark}
If one were to improve the known homological stability ranges for configuration spaces of open manifolds, one would get an improved version of Theorem \ref{TheoremA}. In our proof we used the fact that $i \leq f(k,M,\Z/q\Z)$ implies $i+1-n\leq f(k-1,M,\Z/q\Z)$. This constraint prevents one from improving the stability range in Theorem \ref{TheoremA} beyond slope $n-1$. Currently there are no manifolds whose configuration spaces are known to have a homological stability slope higher than $n-1$ with any non-zero coefficients.
\end{remark}


\section{Stabilization maps}
In this section we prove Theorem \ref{TheoremB}. In order to define a stabilization map for closed manifolds, we will need to consider configuration spaces where particles can coincide. We first recall the definition of the symmetric product and bounded symmetric product of a space.

\begin{definition}
Given a space $X$, let $\mr{Sym}_k(X)$ denote the quotient of $X^k$ by the permutation action of the symmetric group $\fS_k$. Let $\mr{Sym}_k^{\leq j}(X)$ denote the subspace of $\mr{Sym}_k(X)$ where no more than $j$ particles are at the same location.
\end{definition}

A point in $\mr{Sym}_k^{\leq j}(X)$ is a multiset where no element is repeated more than $j$ times. We will think of $\mr{Sym}_k^{\leq j}(X)$ as the configuration space of points in $X$ which are labeled by their multiplicities. We have that $\mr{Sym}_k^{\leq 1}(X)=C_k(X)$ and $\mr{Sym}_k^{\leq j}(X)=\mr{Sym}_k(X)$ for $k \leq j$.

\begin{proposition}
Let $M$ be odd-dimensional. The inclusion map $\iota:C(M) \m \mr{Sym}^{\leq 2}(M)$ induces an isomorphism on homology with $\Z[1/2]$ coefficients.
\end{proposition}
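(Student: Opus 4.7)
The plan is to filter $\mr{Sym}^{\leq 2}_k(M)$ by the number of doubled points and show that only the top (open) stratum $C_k(M)$ contributes to $\Z[1/2]$-homology. For $0 \leq j \leq \lfloor k/2 \rfloor$, let $X_j \subset \mr{Sym}^{\leq 2}_k(M)$ denote the locally closed subspace of multisets with exactly $j$ points of multiplicity $2$, so $X_0 = C_k(M)$, and let $F_j = X_0 \cup \cdots \cup X_j$. A sufficiently small perturbation of a multiset in $F_j$ can resolve a double into two nearby singles but cannot create new coincidences out of points that were already distinct, so each $F_j$ is \emph{open} in $\mr{Sym}^{\leq 2}_k(M)$; in particular $X_j$ is closed in $F_j$. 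It therefore suffices to prove $H_*(F_j, F_{j-1};\Z[1/2]) = 0$ for every $j \geq 1$ and iterate the long exact sequences of the pairs $(F_j, F_{j-1})$.

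For the vanishing, I would use a local-to-global argument. Around any $(D,S) \in X_j$, disjoint Euclidean charts of $M$ about each doubled point of $D$ and each singleton of $S$ identify an open neighbourhood of $(D,S)$ in $F_j$ with the product
\[\prod_{p \in D} \mr{Sym}_2(\R^n) \;\times\; \prod_{q \in S} \R^n,\]
inside which $F_{j-1}$ is the open subset where at least one $\mr{Sym}_2(\R^n)$-factor lies off the diagonal, i.e., in $C_2(\R^n)$. Assembling these local models, a small open neighbourhood $N_j$ of $X_j$ in $F_j$ becomes a fibre-bundle-like object over $X_j$ with fibre $\prod \mr{Sym}_2(\R^n)$; excision together with a Leray/Serre spectral sequence for the pair $(N_j, N_j \cap F_{j-1}) \to X_j$ then reduces the vanishing of $H_*(F_j, F_{j-1}; \Z[1/2])$ to the vanishing of the fibre relative homology, which by the Künneth formula for pairs is
\[H_*(\mr{Sym}_2(\R^n), C_2(\R^n); \Z[1/2])^{\otimes j}.\]

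For the final input, $\mr{Sym}_2(\R^n)$ is contractible and $C_2(\R^n)$ deformation retracts onto $\R P^{n-1}$, so the long exact sequence of the pair gives $H_i(\mr{Sym}_2(\R^n), C_2(\R^n)) \cong \tilde H_{i-1}(\R P^{n-1})$. Since $n$ is odd, the dimension $n-1$ is even, so every reduced integral homology group of $\R P^{n-1}$ is $2$-torsion and vanishes after inverting $2$. This kills each tensor factor above and concludes the argument. The main technical hurdle I expect is making the local product description around $X_j$ globally rigorous in the merely topological category, since $\mr{Sym}^{\leq 2}_k(M)$ is a stratified singular space rather than a manifold; the cleanest workaround is likely to replace literal tubular neighbourhoods by a Čech/Mayer--Vietoris argument for a good cover of $F_j$ by opens of the product form above, so that the vanishing on overlaps reduces to the same Künneth calculation.
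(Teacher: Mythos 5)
Your proposal is correct in spirit and the final computation is the same as the paper's, but the global organization is genuinely different. The paper applies the Leray spectral sequence directly to the inclusion $\iota\colon C(M)\to\mr{Sym}^{\leq 2}(M)$: the stalk of $R^j\iota_*\cA$ at a point with multiplicities $\alpha_1,\dots,\alpha_r\in\{1,2\}$ is $H^j(\prod_i C_{\alpha_i}(\R^n);A)$, and since $C_2(\R^n)\simeq\R P^{n-1}$ is $A$-acyclic for any $\Z[1/2]$-module $A$ (because $n-1$ is even), all higher derived pushforwards vanish and the spectral sequence degenerates to give the cohomology isomorphism, which is then converted to a homology statement. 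You instead stratify $\mr{Sym}^{\leq 2}_k(M)$ by the number $j$ of doubled points and run the long exact sequences of the open filtration $F_0\subset F_1\subset\cdots$, reducing each relative group $H_*(F_j,F_{j-1};\Z[1/2])$ to $H_*(\mr{Sym}_2(\R^n),C_2(\R^n);\Z[1/2])^{\otimes j}$ via local product charts, excision, and relative K\"unneth. The key input is identical: $\tilde H_*(\R P^{n-1};\Z[1/2])=0$ for $n$ odd. What the paper's sheaf-theoretic route buys you is that the ``global assembly'' you flag as a technical hurdle disappears entirely --- the Leray spectral sequence packages the local acyclicity into a single statement about derived pushforwards, with no need for tubular neighbourhoods, good covers, or a fibre-bundle structure over the singular strata $X_j$, which is genuinely delicate in the topological category. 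Your approach is more hands-on and avoids sheaf cohomology, but to make it rigorous you would need to carry out the \v{C}ech/Mayer--Vietoris patching you sketch at the end, which is essentially reproving the relevant case of the Leray spectral sequence by hand; the paper's choice is the cleaner one here. One small stylistic difference: the paper works in cohomology with arbitrary $\Z[1/2]$-module coefficients and then invokes a universal-coefficients-type argument (Proposition 3.3.11 of \cite{MayPonto}) to get the homology statement, whereas you work directly in homology, which is slightly more direct but makes the spectral sequence bookkeeping a bit less standard.
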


\begin{proof} 
It suffices to prove that $\iota:C(M) \m \mr{Sym}^{\leq 2}(M)$ induces an isomorphism on cohomology with coefficients in any $\Z[1/2]$-module (see e.g. Proposition 3.3.11 of \cite{MayPonto}). Let $A$ be a $\Z[1/2]$-module and consider the Leray spectral sequence associated to $\iota$ with $A$ coefficients; see for example Section IV.6 of \cite{bredon}. Let $\cA$ denote the constant sheaf on $C(M)$ with value $A$ on connected sets. Let $R^j \iota_* \cA$ be the sheaf on $\mr{Sym}^{\leq 2}(M)$ associated to the presheaf $U \mapsto H^j(U \cap C(M);A)$. The stalks of $R^j \iota_* \cA$ around a point in $\mr{Sym}^{\leq 2}(M)$ are $H^j(\prod_i C_{\alpha_i}(\R^n);A)$ with $\alpha_i=1$ or $2$ being the multiplicities of the particles at the point of interest in $\mr{Sym}^{\leq 2}(M)$ (see \cite{Tot} for a similar discussion). Note that $C_1(\R^n)$ is contractible and $C_2(\R^n) \simeq \R P^{n-1}$. Since $n$ is odd, $C_2(\R^n)$ has the $A$ cohomology of a point. Thus, the higher derived pushforward sheaves are zero. This shows that $\iota:C(M) \m \mr{Sym}^{\leq 2}(M)$ induces an isomorphism on cohomology with $A$ coefficients.
\end{proof}

We can now define a stabilization map for configuration spaces of closed manifolds after inverting $2$.

\begin{definition} Fix a point $m_0 \in M$. Let $Q:C_k(M) \m \mr{Sym}^{\leq 2}_{k+1}(M)$ be the map that adds a particle at $m_0$. Let $\sigma: H_*(C_k(M);\Z[1/2]) \m H_*(C_{k+1}(M);\Z[1/2])$ be $(\iota_*)^{-1} \circ Q_*$.
\end{definition}

\begin{remark}An alternative construction uses the Vietoris-Begle theorem, which says a proper surjective map between locally compact spaces that has acyclic fibers is a homology equivalence; see for example Section V.6 of \cite{bredon}. Fix a closed disk $D$ around $m_0$ and let $Z \subset C_{k+1}(M)$ be the subspace such that there are either 1 or 2 particles in $D$, and if there are two particles they are distance $\geq 1/4$ apart in the standard metric of the disk. There is a map $\pi: Z \to C_k(M)$ which collapses the disk to $m_0$ and puts either 0 or 1 particles there. This is a proper surjective map between locally compact spaces with fibers homotopy equivalent to either $*$ or $\bR P^{n-1}$. Thus the Vietoris-Begle theorem implies that $\pi$ induces a homology isomorphism with $\bZ[1/2]$-coefficients. We have that $\sigma$ is also the composition $j_* \circ (\pi_*)^{-1}$, where $j: Z \hookrightarrow C_{k+1}(M)$ is the inclusion.\end{remark}

We now prove the stability portion of Theorem \ref{TheoremB} in the case that the manifold is open.
\begin{lemma} Let $M$ be a connected open odd-dimensional manifold. The map $\sigma: H_i(C_k(M);\Z[1/2]) \m H_i(C_{k+1}(M);\Z[1/2])$ is an isomorphism for $i \leq f(k,M,\Z[1/2])$. \label{open}
\end{lemma}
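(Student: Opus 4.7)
The plan is to identify $\sigma$ with the classical stabilization map $t_*$, whereupon the lemma is immediate from Theorem \ref{openConfig}. Fix the embedding $\R^n \sqcup M \hookrightarrow M$ used to define $t$, let $\phi \colon M \hookrightarrow M$ denote its restriction to $M$, and let $m_\infty$ denote the image of $0 \in \R^n$; then $t(x_1,\ldots,x_k) = (\phi(x_1),\ldots,\phi(x_k),m_\infty)$, and by construction $\phi$ is isotopic to $\mathrm{id}_M$ via some isotopy $\phi_s$ with $\phi_0 = \mathrm{id}$ and $\phi_1 = \phi$. Since $M$ is connected, the map $Q$ is independent of the choice of basepoint up to homotopy: any path in $M$ between two choices of $m_0$ induces a homotopy of $Q$ through $\mr{Sym}^{\leq 2}_{k+1}(M)$, because the moving point can coincide with at most one of the pairwise distinct $x_i$ at a time. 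Thus one may assume $m_0 = m_\infty$.

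The key step is then to construct a homotopy $H_s \colon C_k(M) \to \mr{Sym}^{\leq 2}_{k+1}(M)$ from $Q$ to $\iota \circ t$, defined by
\[
H_s(x_1,\ldots,x_k) = \{\phi_s(x_1),\ldots,\phi_s(x_k),\, m_\infty\}.
\]
This is well-defined into $\mr{Sym}^{\leq 2}_{k+1}(M)$ because $\phi_s$ is an embedding for every $s$, so the points $\phi_s(x_i)$ are pairwise distinct, and $m_\infty$ can coincide with at most one of them. At $s=0$ we recover $Q$ and at $s=1$ we recover $\iota \circ t$. Passing to homology therefore gives $Q_* = \iota_* \circ t_*$, and inverting $\iota_*$ (which is an isomorphism on $\Z[1/2]$-homology by the preceding proposition) yields $\sigma = t_*$ on $H_*(-;\Z[1/2])$. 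The conclusion then follows from Theorem \ref{openConfig}.

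The main subtlety is the verification that $H_s$ stays inside the multiplicity-at-most-two locus throughout the isotopy; this works here precisely because we are adjoining a single extra point $m_\infty$ to a bona fide configuration, and is the geometric input that forces the construction of $\sigma$ to use $\mr{Sym}^{\leq 2}$ rather than the full symmetric product. Once the homotopy is in place, everything else is formal.
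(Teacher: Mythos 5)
Your proof is correct and takes essentially the same approach as the paper's, which simply asserts that $Q$ is homotopic to $\iota \circ t$ and then concludes from Theorem \ref{openConfig} and the fact that $\iota_*$ is an isomorphism with $\Z[1/2]$-coefficients. You have merely made explicit the homotopy (via the isotopy $\phi_s$ and the basepoint-independence of $Q$) and the verification that it stays in $\mr{Sym}^{\leq 2}_{k+1}(M)$, which the paper leaves to the reader.
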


\begin{proof}
We have that $Q$ is homotopic to $\iota \circ t$. Since $t$ induces an isomorphism in the stable range and $\iota$ is a homology equivalence with $\Z[1/2]$ coefficients, the claim follows.
\end{proof}

\begin{remark}
This proof also shows that for odd-dimensional connected manifolds, the map on homology with $\Z[1/2]$ coefficients induced by the stabilization map does not depend on choice of end $M$.
\end{remark}

We now define an augmented semisimplicial space similar to those in Proposition 9.4 of \cite{RW} and Definition 5.3 of \cite{kupersmillertran}.
\begin{definition}We define an augmented semisimplicial space $X^{k,\leq j}_\bullet$ by taking as $i$-simplices for $i \geq -1$ the following
\[X^{k,\leq j}_i = \bigsqcup_{\vec{p} \in F_{i+1}(M\backslash\{m_0\}) } \mr{Sym}^{\leq j}_k(M \backslash \{p_0,\ldots, p_i\})\] where $F_i(M\backslash\{m_0\})$ denotes the ordered configuration space of $i$ particles in $M\backslash\{m_0\}$. The natural inclusions of $M \backslash \{p_0,\ldots, p_i\}$ into $M \backslash \{p_0, \ldots, \hat p_l, \ldots, p_i\}$ induce the face maps and augmentation.
\end{definition}

That is, a point in the space of $i$-simplicies of $X^{k,\leq j}_\bullet$ is a collection $i+1$ ordered punctures in $M\backslash\{m_0\}$ and an element of the bounded symmetric product of the punctured manifold. The $l$th face maps can be thought of as forgetting the $l$th puncture. The augmentation map forgets the last puncture. The only difference between these augmented semisimplicial spaces and those appearing in \cite{RW} and \cite{kupersmillertran} is that here we insist that no punctures can be at $m_0$. Note that $X^{k,\leq j}_{-1}=\mr{Sym}_k^{\leq j}(M)$ and so the augmentation induces a map $||X^{k,\leq j}_\bullet|| \m \mr{Sym}^{\leq j}_k(M)$.

\begin{lemma} The augmentation map induces a weak equivalence $||X^{k,\leq j}_\bullet|| \m \mr{Sym}^{\leq j}_k(M)$.
\end{lemma}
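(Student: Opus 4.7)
The plan is to show the augmentation has weakly contractible homotopy fibers over every point of $\mr{Sym}^{\leq j}_k(M)$. A point of $X^{k,\leq j}_i$ is a pair $(\vec p, x)$ with $\vec p \in F_{i+1}(M\setminus\{m_0\})$ and $x \in \mr{Sym}^{\leq j}_k(M \setminus \vec p)$, so $X^{k,\leq j}_i$ is the open subspace of $F_{i+1}(M\setminus\{m_0\}) \times \mr{Sym}^{\leq j}_k(M)$ cut out by the condition $\vec p \cap \mr{supp}(x) = \varnothing$, and the augmentation is the restriction of the second projection. In particular each level is an open map, and for fixed $x$ the level-wise fiber is $F_{i+1}(N_x)$, where $N_x := M \setminus (\{m_0\} \cup \mr{supp}(x))$. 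The strict fiber of the augmentation over $x$ is therefore the realization of the augmented semi-simplicial space $[i] \mapsto F_{i+1}(N_x)$ (augmented over a point). Since we are assuming $\dim M \geq 2$, the space $N_x$ is a connected manifold of positive dimension.

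The key input is the standard fact, used in exactly the same manner in Proposition 9.4 of \cite{RW} and in the augmented semi-simplicial space of \cite{kupersmillertran}, that for any connected manifold $N$ of positive dimension the realization $\|[i]\mapsto F_{i+1}(N)\|$ is weakly contractible. One proves this by an ``extra point'' cone construction: any map from a compact CW complex factors through a compact subset $K$ of the realization which involves only finitely many configurations, so one can choose $q \in N$ disjoint from all of them and build an explicit null-homotopy by coning, prepending $q$ as a new $0$th vertex. This contractibility identifies the strict fiber of the augmentation over each $x$ with a weakly contractible space.

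To upgrade pointwise contractibility of fibers to a weak equivalence of the augmentation, I would verify a quasifibration property via the skeletal filtration of $\|X^{k,\leq j}_\bullet\|$. Since each $X^{k,\leq j}_i \to \mr{Sym}^{\leq j}_k(M)$ is the second projection from an open subspace of a product, one stratifies the base by multiplicity pattern of the support, on each stratum the augmentation restricts to a fiber bundle, and the same extra-point argument provides fiberwise contractions that are compatible across strata. The main obstacle is this last step: the fiber $N_x$ jumps when two points of $\mr{supp}(x)$ collide, so the augmentation is not a fiber bundle globally; the positive codimension of the collision strata (guaranteed by $\dim M \geq 2$) and the flexibility of the extra-point construction are what make the quasifibration argument go through, from which the lemma follows.
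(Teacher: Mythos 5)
The paper gives no self-contained proof: it simply cites page 324 of the published version of Randal-Williams' paper and Proposition 5.8 of Kupers--Miller--Tran, remarking that the arguments there use nothing about $M\backslash\{m_0\}$ beyond it being an infinite set.

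Your identification of the strict fiber over $x\in\mr{Sym}^{\leq j}_k(M)$ with $\|[i]\mapsto F_{i+1}(N_x)\|$ is correct, and the extra-point cone argument does show those fibers are weakly contractible (in fact, connectedness of $N_x$ is not even needed; only that it is infinite, which is why the cited proofs get away with the weaker ``infinite set'' hypothesis). Where the proposal has a genuine gap is your Step~3, the promotion from pointwise contractible fibers to a weak equivalence of the augmentation. You propose a stratified quasifibration argument (stratify $\mr{Sym}^{\leq j}_k(M)$ by multiplicity pattern, claim a fiber bundle on each stratum, and claim the extra-point contractions are ``compatible across strata''). That last claim is doing all of the work and you don't substantiate it. The standard quasifibration criterion requires producing a fiberwise deformation retraction of a neighborhood of each lower stratum onto that stratum inducing weak equivalences on fibers; here the fiber $\|[i]\mapsto F_{i+1}(N_x)\|$ jumps discontinuously as support points collide (a puncture of $N_x$ vanishes), there is no canonical specialization map between fibers of adjacent strata, and you give no argument that the required retractions exist or that they interact correctly with the extra-point construction. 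Nothing you say rules out this working, but nothing you say establishes it either, and it is not a routine verification.

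The cited proofs sidestep this entirely. Rather than arguing stratum by stratum, they work globally: one either shows the augmentation is a Serre microfibration with weakly contractible fibers and invokes Weiss' lemma that such a map is a weak equivalence, or one directly lifts maps from disks (relative to the boundary) into $\|X^{k,\leq j}_\bullet\|$ by using compactness to pick a single auxiliary point $q\in M\backslash\{m_0\}$ avoiding all supports and punctures in the image, then coning. Either route avoids the need to control what happens at the strata boundaries. If you want to salvage your write-up without changing the basic idea, I'd recommend replacing the stratified quasifibration claim with the microfibration-plus-Weiss argument, or with the direct relative lifting argument from the cited sources.
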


\begin{proof}
See Page 324 of \cite{RW} (Page 17 of the arXiv-version) or Proposition 5.8 of \cite{kupersmillertran}, which only require that $M\backslash\{m_0\}$ is an infinite set.
\end{proof}

\begin{theorem} Let $M$ be a connected odd-dimensional manifold. The map $\sigma: H_i(C_k(M);\Z[1/2]) \m H_i(C_{k+1}(M);\Z[1/2])$ is an isomorphism for $i \leq f(k,M,\Z[1/2])$.
\end{theorem}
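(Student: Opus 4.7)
The plan is to reduce to the open case (Lemma \ref{open}) via the augmented semisimplicial resolutions $X^{k,\leq j}_\bullet$ just introduced. Since $\iota$ is a $\Z[1/2]$-homology isomorphism, it suffices to show that $Q\colon C_k(M) \to \mr{Sym}^{\leq 2}_{k+1}(M)$ is a $\Z[1/2]$-homology isomorphism in the stated range. Because every puncture $p_j$ lies in $M\setminus\{m_0\}$, the basepoint $m_0$ is always in $M\setminus\{p_0,\ldots,p_p\}$, so ``add a particle at $m_0$'' defines a map of augmented semisimplicial spaces
\[ Q_\bullet\colon X^{k,\leq 1}_\bullet \longrightarrow X^{k+1,\leq 2}_\bullet \]
which commutes with face maps and augmentation, and whose augmentation recovers $Q$.

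On each level $p \geq 0$, $Q_p$ is a disjoint union, indexed by $\vec p \in F_{p+1}(M\setminus\{m_0\})$, of stabilization maps $Q\colon C_k(N_{\vec p}) \to \mr{Sym}^{\leq 2}_{k+1}(N_{\vec p})$ for the open odd-dimensional manifolds $N_{\vec p} := M\setminus\{p_0,\ldots,p_p\}$. Since $\dim N_{\vec p} = n \ge 3$ and $2$ is a unit in $\Z[1/2]$, the formula in Theorem \ref{openConfig} gives $f(k, N_{\vec p}, \Z[1/2]) = f(k, M, \Z[1/2]) = k$. Hence Lemma \ref{open} applies on each summand and shows that $Q_p$ is a $\Z[1/2]$-homology isomorphism in degrees $\le f(k, M, \Z[1/2])$, uniformly in $p$.

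To pass from the levelwise statement to the realization, I would filter $||X^{k,\leq 1}_\bullet||$ and $||X^{k+1,\leq 2}_\bullet||$ by skeleta, whose subquotients are $\Sigma^{p}(X_p)_+$, and induct on $p$ with the five-lemma applied to the corresponding cofiber long exact sequences. The cofiber at level $p$ contributes $H_{i-p}(X_p;\Z[1/2])$, and a uniform levelwise isomorphism range $i \le f(k,M,\Z[1/2])$ is preserved (in fact improved) as $p$ grows. Passing to the colimit and combining with the weak equivalences $||X^{k,\leq j}_\bullet||\simeq \mr{Sym}^{\leq j}_k(M)$ of the previous lemma, one concludes that $Q$, and hence $\sigma = \iota_*^{-1} \circ Q_*$, is a $\Z[1/2]$-homology isomorphism for $i \le f(k,M,\Z[1/2])$.

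The place that requires care is confirming that the skeletal/spectral-sequence passage really preserves the uniform levelwise range without loss. Everything else is a formal combination of (a) the already-established comparison of $C$ with $\mr{Sym}^{\leq 2}$, (b) the open case of the theorem, and (c) the resolution lemma. The inductive step of the five-lemma argument is where one must check that the shift by $p$ in the cofiber subquotients only helps, rather than eating into the stability range, so that the entire range $i \le f(k,M,\Z[1/2])$ survives the passage to geometric realizations.
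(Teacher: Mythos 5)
Your proposal is correct and follows essentially the same route as the paper's own proof: reduce to showing $Q$ is a $\Z[1/2]$-homology equivalence, promote $Q$ to a map of augmented semisimplicial spaces $Q_\bullet$ (possible precisely because punctures avoid $m_0$), observe that on non-negative levels the fibers are open odd-dimensional manifolds so Lemma \ref{open} gives a uniform levelwise range $i \le k$, and then pass through the geometric realization spectral sequence together with the resolution lemma. Your elaboration of the skeletal filtration step (noting that the homological shift by $p$ only improves the range) is exactly the reason the paper can simply cite the ``geometric realization spectral sequence.''
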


\begin{proof} It suffices to show that $Q$ induces a homology equivalence in a range with $\Z[1/2]$ coefficients. The map $Q$ extends to a map $Q_\bullet : X^{k,\leq 1}_\bullet \m X^{k+1,\leq 2}_\bullet$ since there are no punctures at $m_0$. It suffices to show that this map is a levelwise homology equivalence in a range with $\Z[1/2]$ coefficients as the claim would then follow by considering the geometric realization spectral sequence. However, the manifolds appearing on all non-negative simplicial levels are open and so the claim follows by Lemma \ref{open}.
\end{proof}

To complete the proof of Theorem \ref{TheoremB}, we now prove that the map $Q$ is split injective on homology using Lemma 2.2 of \cite{Do}. To do this, we recall the definition of so-called \emph{transfer maps}. For $l \geq k$, there is a map
\[c: \mr{Sym}^{\leq j}_l(M)) \m \mr{Sym}_{l \choose k}(\mr{Sym}^{\leq j}_k(M))\] which records all $k$ element submultisets of a given multiset. For any space $X$, there is a natural map
\[a: H_*(\mr{Sym}_d(X)) \m H_*(X)\]
which is defined by viewing a chain in $\mr{Sym}_d(X)$ as $d$ chains in $X$ and adding the $d$ chains together.

\begin{definition}
For $l \geq k$, let $\tau^j_{k,l}: H_*(\mr{Sym}^{\leq j}_l(M)) \m H_*(\mr{Sym}^{\leq j}_k(M))$ be $a \circ c_*$.
\end{definition}

\begin{theorem} Let $M$ be an odd-dimensional manifold. The map $\sigma: H_*(C_k(M);\Z[1/2]) \m H_*(C_{k+1}(M);\Z[1/2])$ is split injective in all degrees.
\end{theorem}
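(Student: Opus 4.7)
The plan is to invoke Dold's Lemma 2.2 of \cite{Do} with the transfer $\tau^{\leq 2}_{k,k+1}$ serving as the splitting. Since $\iota_*$ is a $\Z[1/2]$-isomorphism and $Q_* = \iota_* \circ \sigma$, split injectivity of $\sigma$ is equivalent to that of $Q_* \colon H_*(C_k(M);\Z[1/2]) \m H_*(\mr{Sym}^{\leq 2}_{k+1}(M);\Z[1/2])$, for which I would produce a left inverse via the transfer.

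First, I would compute $\tau^{\leq 2}_{k,k+1} \circ Q_*$ at the chain level. Given a configuration $\{x_1,\ldots,x_k\} \in C_k(M)$, the map $Q$ yields the multiset $\{m_0, x_1, \ldots, x_k\}$, and $\tau^{\leq 2}_{k,k+1} = a \circ c_*$ sums the $k+1$ cardinality-$k$ submultisets of this. Exactly one such submultiset is $\{x_1, \ldots, x_k\}$ itself, contributing $\iota_*(\alpha)$; the other $k$ each consist of $m_0$ together with $k-1$ of the $x_i$'s. Thus on $H_*(-;\Z[1/2])$ one obtains
\[
\tau^{\leq 2}_{k,k+1} \circ Q_*(\alpha) = \iota_*(\alpha) + S(\alpha),
\]
where $S(\alpha)$ collects the ``swap'' contributions. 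Setting $R := (\iota_*)^{-1} \circ \tau^{\leq 2}_{k,k+1}$, one has $R \circ Q_* = \mr{id} + T$ with $T := (\iota_*)^{-1} S$.

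The main obstacle is showing that $\mr{id} + T$ is an isomorphism over $\Z[1/2]$, as this will furnish a left inverse $(\mr{id}+T)^{-1} \circ R$ for $Q_*$ and complete the proof via Dold's Lemma 2.2. After choosing cycle representatives in $C_k(M\setminus\{m_0\})$ (possible using the cofiber sequence of Proposition \ref{cofiber} inductively on the homological degree), the triviality of the symmetric group action on $H_*(C_k(M);\Z[1/2])$ identifies the $k$ summands of $S$ in homology, so $T$ presents as $k \cdot \nu_*$ for a single map $\nu_*$ factoring as a transfer $H_*(C_k(M\setminus\{m_0\})) \m H_*(C_{k-1}(M\setminus\{m_0\}))$ followed by the ``add $m_0$'' stabilization $H_*(C_{k-1}(M\setminus\{m_0\})) \m H_*(C_k(M))$. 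Since this open-manifold stabilization is controlled by Lemma \ref{open}, one can analyze $\mr{id} + k \nu_*$ through a natural decreasing filtration on $H_*(C_k(M);\Z[1/2])$ counting proximity to $m_0$, with respect to which $k\nu_*$ is strictly filtration-lowering. Then $\mr{id}+k\nu_*$ is unipotent on the associated graded and hence an isomorphism, so Dold's Lemma 2.2 gives split injectivity of $Q_*$, and composing with $(\iota_*)^{-1}$ transfers the splitting to $\sigma$.
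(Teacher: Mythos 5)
Your reduction of split injectivity of $\sigma$ to that of $Q_*$ is correct, and so is the chain-level computation $\tau^2_{k,k+1}\circ Q_* = \iota_* + S$ (one submultiset omits $m_0$, the other $k$ contain it). But from there the argument diverges from what Dold's Lemma 2.2 actually provides, and the step that carries all the weight is not justified. Dold's lemma does not ask you to show that $\tau_{k,k+1}\circ\sigma_k$ is invertible; it takes as input the entire family of transfers $\tau_{k,l}$ for $l\geq k$ together with the relation
\[
\tau^1_{k,l}\circ\sigma_{l-1} \;=\; \tau^1_{k,l-1} \;+\; \sigma_{k-1}\circ\tau^1_{k-1,l-1},
\]
and produces a splitting of $\sigma_k$ by an induction over $l$. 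The paper verifies exactly this relation, translated through $\iota_*$ to the identity $\tau^2_{k,l}\circ (Q_{l-1})_* = \iota_*\circ\tau^1_{k,l-1} + (Q_{k-1})_*\circ\tau^1_{k-1,l-1}$, which holds essentially at the point-set level: a $k$-element submultiset of a multiset-with-one-added-element either contains the added element or it does not. No endomorphism needs to be inverted. Your proposal names Dold's lemma at the end but does not actually invoke it --- if $\mr{id}+T$ were shown invertible directly, one would already have a left inverse $(\mr{id}+T)^{-1}\circ R$ without any lemma.

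The gap in the proposal is precisely the claim that $\mr{id}+T$ is invertible. Two points. First, the proposed ``natural decreasing filtration on $H_*(C_k(M);\Z[1/2])$ counting proximity to $m_0$,'' with respect to which $T$ is supposed to be strictly filtration-lowering, is not a construction you have supplied and not one I know of: classes in the homology of $C_k(M)$ carry no well-defined distance to $m_0$, and nothing in Proposition \ref{cofiber} produces a bounded filtration on which $T$ is nilpotent. Second, the manipulation that precedes it --- choosing cycle representatives in $C_k(M\setminus\{m_0\})$ --- is not always possible: the long exact sequence of Proposition \ref{cofiber} shows that the map $H_*(C_k(M\setminus\{m_0\}))\m H_*(C_k(M))$ can fail to be surjective, so the asserted factorization of $T$ through a transfer on the punctured manifold is not available for all classes. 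I'd replace the entire second paragraph with the verification of the multi-degree transfer identity above (as the paper does); that combinatorial check is what Dold's lemma needs, and it sidesteps both the missing filtration and the representative-choosing step.
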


\begin{proof}
By Lemma 2.2 of \cite{Do}, it suffices to check that $\tau^1_{k,l} \circ \sigma_{l-1} = \tau^1_{k,l-1} + \sigma_{k-1} \circ \tau^1_{k-1,l-1}$. Since $\iota_* \circ \tau^1_{k,l} = \tau^2_{k,l} \circ \iota_*$, it suffices to show that $\tau^2_{k,l} \circ (Q_{l-1})_* = \iota_* \circ \tau^1_{k,l-1} + (Q_{k-1})_*\circ \tau^1_{k-1,l-1}$. Here the subscripts on the maps $Q$ and $\sigma$ indicate the number of particles in the domain. This is the statement that if one starts with a multiset, adds an element and then deletes a collection of elements, this collection either contains the added element (first summand) or it does not (second summand). \end{proof}

\begin{remark}
The fact that $\sigma$ exists seems to be related to the fact that odd-dimensional spheres are $H$-spaces after inverting $2$, which indicates that $C(\R^n)$ is slightly more commutative than $E_n$ when $n$ is odd. The arguments of this section were inspired by the proof of Proposition 3.1 of the first arXiv version of \cite{BMi}. The proof of that proposition has a mistake, but works when $M$ is parallelizable. Applying the scanning map to the constructions of this section yields the constructions appearing in the proof of that proposition. \end{remark}

\bibliographystyle{amsalpha}
\bibliography{periodicity}

\def\cprime{$'$}
\providecommand{\bysame}{\leavevmode\hbox to3em{\hrulefill}\thinspace}
\providecommand{\MR}{\relax\ifhmode\unskip\space\fi MR }
\providecommand{\MRhref}[2]{%
  \href{http://www.ams.org/mathscinet-getitem?mr=#1}{#2}
}
\providecommand{\href}[2]{#2}
\begin{thebibliography}{KMT14}

\bibitem[BCT89]{BCT}
C.-F. B{\"o}digheimer, F.~Cohen, and L.~Taylor, \emph{On the homology of
  configuration spaces}, Topology \textbf{28} (1989), no.~1, 111--123.
  \MR{991102 (90h:57031)}

\bibitem[BM14]{BMi}
Martin Bendersky and Jeremy Miller, \emph{Localization and homological
  stability of configuration spaces}, Q. J. Math. \textbf{65} (2014), no.~3,
  807--815. \MR{3261967}

\bibitem[Bre97]{bredon}
Glen~E. Bredon, \emph{Sheaf theory}, second ed., Graduate Texts in Mathematics,
  vol. 170, Springer-Verlag, New York, 1997. \MR{1481706 (98g:55005)}

\bibitem[Bro60]{Br}
William Browder, \emph{Homology operations and loop spaces}, Illinois J. Math.
  \textbf{4} (1960), 347--357. \MR{0120646 (22 \#11395)}

\bibitem[CEF12]{churchellenbergfarb}
Thomas Church, Jordan~S. Ellenberg, and Benson Farb, \emph{{FI}-modules: a new
  approach to stability for ${S}_n$-representations}, preprint (2012),
  \url{http://arxiv.org/abs/1204.4533v2}.

\bibitem[Chu12]{Ch}
Thomas Church, \emph{Homological stability for configuration spaces of
  manifolds}, Invent. Math. \textbf{188} (2012), no.~2, 465--504. \MR{2909770}

\bibitem[CLM76]{CLM}
Frederick~R. Cohen, Thomas~J. Lada, and J.~Peter May, \emph{The homology of
  iterated loop spaces}, Lecture Notes in Mathematics, Vol. 533,
  Springer-Verlag, Berlin, 1976. \MR{0436146 (55 \#9096)}

\bibitem[CP15]{federicomartin}
Federico Cantero and Martin Palmer, \emph{On homological stability for
  configuration spaces on closed background manifolds}, Doc. Math. \textbf{20}
  (2015), 753--805. \MR{3398727}

\bibitem[Dol62]{Do}
Albrecht Dold, \emph{Decomposition theorems for {$S(n)$}-complexes}, Ann. of
  Math. (2) \textbf{75} (1962), 8--16. \MR{0137113 (25 \#569)}

\bibitem[FT00]{FT}
Yves F{\'e}lix and Jean-Claude Thomas, \emph{Rational {B}etti numbers of
  configuration spaces}, Topology Appl. \textbf{102} (2000), no.~2, 139--149.
  \MR{1741482 (2001g:57054)}

\bibitem[FVB62]{FV}
Edward Fadell and James Van~Buskirk, \emph{The braid groups of {$E^{2}$} and
  {$S^{2}$}}, Duke Math. J. \textbf{29} (1962), 243--257. \MR{0141128 (25
  \#4539)}

\bibitem[Hat02]{hatcherbook}
Allen Hatcher, \emph{Algebraic topology}, Cambridge University Press,
  Cambridge, 2002. \MR{1867354 (2002k:55001)}

\bibitem[KM15]{kupersmillerimprov}
Alexander Kupers and Jeremy Miller, \emph{Improved homological stability for
  configuration spaces after inverting 2}, Homology Homotopy Appl. \textbf{17}
  (2015), no.~1, 255--266. \MR{3344444}

\bibitem[KMT14]{kupersmillertran}
Alexander Kupers, Jeremy Miller, and TriThang Tran, \emph{Homological stability
  for symmetric complements}, To appear in Transactions of the American
  Mathematical Society (preprint 2014).

\bibitem[Knu14]{Kn}
Ben Knudsen, \emph{Betti numbers and stability for configuration spaces via
  factorization homology}, preprint (2014),
  \url{http://arxiv.org/abs/1405.6696}.

\bibitem[McD75]{Mc1}
Dusa McDuff, \emph{Configuration spaces of positive and negative particles},
  Topology \textbf{14} (1975), 91--107. \MR{0358766 (50 \#11225)}

\bibitem[MP12]{MayPonto}
J.~P. May and K.~Ponto, \emph{More concise algebraic topology}, Chicago
  Lectures in Mathematics, University of Chicago Press, Chicago, IL, 2012,
  Localization, completion, and model categories. \MR{2884233}

\bibitem[Nag15]{Nag}
Rohit Nagpal, \emph{{FI}-modules and the cohomology of modular representations
  of symmetric groups}, preprint (2015), \url{http://arxiv.org/abs/1505.04294}.

\bibitem[RW13]{RW}
Oscar Randal-Williams, \emph{Homological stability for unordered configuration
  spaces}, Quarterly Journal of Mathematics (2013), no.~64 (1), 303--326.

\bibitem[Seg79]{Se}
Graeme Segal, \emph{The topology of spaces of rational functions}, Acta Math.
  \textbf{143} (1979), no.~1-2, 39--72. \MR{533892 (81c:55013)}

\bibitem[Tot96]{Tot}
Burt Totaro, \emph{Configuration spaces of algebraic varieties}, Topology
  \textbf{35} (1996), no.~4, 1057--1067. \MR{1404924 (97g:57033)}

\end{thebibliography}
\end{document}